\documentclass[11pt, a4paper]{amsart}

\usepackage{color}
\usepackage{amsfonts}
\usepackage{amssymb}
\usepackage{amsmath}
\usepackage{hyperref}

\newcommand{\ds}{\displaystyle}

\newcommand{\cii}[1]{_{ {}_{ #1}}}



\newcommand{\eq}[2][label]{\begin{equation}\label{#1}#2\end{equation}}

\newcommand{\av}[2]{\langle #1\rangle\cii {#2}}
\newcommand{\df}{\buildrel\rm{def}\over=}

\newcommand{\const}{{\rm const}}
\newcommand{\Bel}{\mathbf B}

\newcommand{\ma}{Monge--Amp\`{e}re }

\newcommand{\bp}[1]{\mathfrak{b}_{#1}}

\newcommand{\pd}{\partial}

\def\supp{\operatorname{supp}}

\newcommand{\ch}{\operatorname{ch}}

\newcommand{\Qq}{[w]_{A_1}}


\newcommand{\eps}{\varepsilon}
\newcommand{\la}{\lambda}
\newcommand{\vf}{\varphi}


\newcommand{\cB}{\mathcal B}
\newcommand{\cD}{\mathcal D}

\newcommand{\bR}{\mathbb R}

\newcommand{\bZ}{\mathbb Z}

\newcommand{\mfA}{\mathfrak A}

\newcommand{\bfM}{\mathbf{M}}


\newtheorem{theorem}{Theorem}[section]
\newtheorem{cor}[theorem]{Corollary}
\newtheorem{lemma}[theorem]{Lemma}
\newtheorem{prop}[theorem]{Proposition}
\theoremstyle{definition}
\newtheorem{remark}[theorem]{Remark}
\newtheorem{defin}[theorem]{Definition}

\numberwithin{equation}{section}

\newcounter{vremennyj}

\begin{document}
\title{On weak  weighted estimates of martingale transform}
\author[F.~Nazarov]{Fedor Nazarov}
\thanks{FN is partially supported by the NSF grant DMS-1265623}
\address{Department of Mathematics, Kent Sate University, USA}
\email{nazarov@math.kent.edu \textrm{(F. \ Nazarov)}}
\author[A.~Reznikov]{Alexander Reznikov}
\address{Department of Mathematics, Michigan Sate University, East Lansing, MI. 48823}
\email{reznikov@math.msu.edu \textrm{(A. \ Reznikov)}}
\author[V.~Vasyunin]{Vasily Vasyunin}
\address{V. A. Steklov Math. Inst., Fontanka 27, St. Petersburg, Russia}
\email{vasyunin@pdmi.ras.ru \textrm{(V.\ Vasyunin)}}
\author[A.~Volberg]{Alexander Volberg}
\thanks{AV and VV are partially supported by the program ``Research in Pairs'' of the Oberwolfach Institute 
for Mathematics, and by the NSF grant DMS-1600065; 
AV is also supported by the NSF grant DMS-1265549, and VV is also supported by the RFBR grant 14-01-00748.}
\address{Department of Mathematics, Michigan Sate University, East Lansing, MI. 48823}
\email{volberg@math.msu.edu \textrm{(A.\ Volberg)}}
\makeatletter
\@namedef{subjclassname@2010}{
  \textup{2010} Mathematics Subject Classification}
\makeatother
\subjclass[2010]{42B20, 42B35, 47A30}
\keywords{}
\begin{abstract} 
We consider several weak type estimates for  singular operators using the Bellman function approach. We disprove the $A_1$ conjecture, which stayed open after  Muckenhoupt--Wheeden's conjecture was  disproved by Reguera--Thiele.
\end{abstract}
\maketitle

\section{End-point estimates. Notation and facts.}
\label{introEndPoint}

The end-point estimates play an important part in the theory of singular integrals 
(weighted or unweighted). They are usually the most difficult estimates in the theory, 
and the most interesting of course. 
It is a general principle that one can extrapolate the estimate from the end-point 
situation to all other situations.  We refer the reader to the book \cite{CUMPbook}
that treats this subject of extrapolation in depth. 

On the other hand, it happens quite often that the singular integral estimates exhibit 
a certain ``blow-up'' near the end point. To catch this blow-up can be a difficult task. 
We demonstrate this hunt for  blow-ups by the examples of weighted dyadic singular 
integrals and their behavior in $L^p(w)$.
The end-point $p$ will be naturally $1$ (and  sometimes slightly unnaturally $2$) 
depending on the martingale  singular operator. The singular integrals in this article 
are the easiest possible. They are dyadic martingale operators on $\sigma$-algebra 
generated by usual homogeneous dyadic lattice on the real line. We do not consider 
any non-homogeneous situation, and this standard $\sigma$-algebra generated by a dyadic 
lattice $\cD$ will be provided with Lebesgue measure. 

Our goal will be to show how the technique of Bellman function gives 
the proof of the blow-up of the weighted estimates of the corresponding 
weighted dyadic singular operators. This blow-up will be demonstrated 
by certain estimates from below of the Bellman function of a dyadic problem. 
Interestingly, one can bootstrap then the correct estimates from below of 
a dyadic operators to the estimate from below of such classical operators as e. g.  
the Hilbert transform. The same rate of blow-up then persists for the classical operators. 
But this bootstrapping argument will be carried out in a separate note, here, 
for simplicity, we work only with dyadic martingale operators.

As to the Bellman function part of our consideration below, this part will be reduced to 
the task to find the lower estimate for the solutions of the homogeneous Monge--Amp\`ere 
differential equation.

\subsection{End-point estimates for martingale transform}
\label{MartAndSq}

We will work with a standard dyadic filtration $\cD=\cup_k \cD_k$ on $\bR$,
where 
$$
\cD_k\df\Big\{[\frac m{2^k},\frac{m+1}{2^k})\colon m\in\bZ\Big\}\,.
$$
We consider the martingale transform related to this homogeneous dyadic filtration. 

The symbol $\av{\vf}I$ denotes average value of $\vf$ over the set $I$ 
i.\,e., 
$$
\av{\vf}I=\frac1{|I|}\int_I\vf(t)\,dt.
$$
We consider martingale differences 
(recall that the symbol $\ch(J)$ denotes the dyadic children of $J$)
$$
\Delta\cii J\vf\df\sum _{I\in\ch(J)} \chi\cii I(\,\av{\vf}I\!-\av{\vf}J).
$$

For our case of dyadic lattice on the line we have  that $|\Delta_J\vf|$ is constant on $J$, 
the set $\ch(J)$ consists of two halves of $J$ ($J^+$ and $J^-$), and
$$
\Delta\cii J\vf =\frac12(\,\av\vf{J^+}\!-\av\vf{J^-}\!)(\,\chi\cii{J^+}\!-\chi\cii{J^-}\!)\,.
$$

We consider the dyadic $A_1$ class of weights, but we skip the word dyadic in 
what follows, because we consider here only dyadic operators.
A positive  function $w$ is called an $A_1$ weight if
$$
\Qq\df\sup_{J\in \cD}\frac{\av wJ}{\inf_J w}<\infty\,.
$$

By $Mw$ we will denote the dyadic maximal function of $w$, that is 
$Mw(x)=\sup\{\av wJ\colon J\in\cD,\,J\ni x\}$. Then $w\in A_1$ with ``norm'' $Q$ means that
$$
Mw \le Q \cdot w\quad a.\,e.\,,
$$
and $Q=\Qq$ is the best constant in this inequality.

Recall that a martingale transform is an operator given by
$
T_\eps\vf =\sum_{J\in\cD}\eps\cii J\Delta\cii J\vf\,.
$
It is convenient to use Haar function $h\cii J$ associated with dyadic interval $J$,
$$
h\cii J(x):=\begin{cases}
\phantom{-}\frac1{|J|^{1/2}},& x\in J^+\,;
\\
-\frac1{|J|^{1/2}},& x\in J^-\,,
\\
\hskip10pt 0,& x\notin J\,.
\end{cases}
$$
Sometimes it is more convenient to use the Haar functions $H\cii J$ normalized in $L^\infty$: 
$H\cii J=|J|^{1/2}h\cii J$.
In this notations, the martingale transform $\psi$ of a function $\vf$ is
$$
\psi=T_\eps\vf=\sum_{J\in\cD}\frac{\eps\cii J}{|J|}(\vf, H\cii J)H\cii J=
\sum_{J\in\cD}\eps\cii J(\vf, h\cii J)h\cii J\,,
$$
where $(\,\cdot\,,\,\cdot\,)$ stands for the scalar product in $L_2$.
In all our calculations we always think the sum has only unspecified but finite number of terms, 
so we may not to worry about the converges of this series. Nevertheless approximation arguments
give us the final estimates for an arbitrary $L^1$~function~$\vf$.
As to the values of the multiplicator coefficients we consider the class $|\eps\cii J|\le1$
or its important subclass $\eps\cii J=\pm1$.

We are interested in the weak estimate for the martingale transform $T$ 
in the weighted space $L^1(\bR, w\,dx)$, where $w\in A_1$. 
The end-point exponent is naturally $p=1$, and we wish to understand the 
order of magnitude of the constant $C(\Qq)$ in the weak type inequality for the 
dyadic martingale transform:
\begin{equation}
\label{weakMT}
\frac1{|I|}\sup_\eps w\{t\in I\colon\!\sum_{J\in \cD(I)}\!\eps\cii J (\vf, h\cii J) h\cii J(t)\ge\la\} 
\le C(\Qq)\frac{\av{|\vf|w}I}{\la}\,.
\end{equation}
Here $\vf$ runs over all functions  such that $\supp \vf\subset I$ and $\vf\in L^1(I,w\,dt)$, 
$w\in A_1$; $\eps=\{\eps\cii J\}$ and $|\eps\cii J|\le1$. For a set $S$ we write $w(S)$ for $\int_Sw(t)\,dt$. 
This paper is devoted to the study of the ``sharp'' order of magnitude of constants 
$C(\Qq)$ in terms of $\Qq$ if $\Qq$ is large. We are primarily interested in the estimate of 
$C(\Qq)$ from below, that is in finding the worst possible $A_1$ weight in terms of weak type 
estimate (of course this involved also finding the worst test function $\vf$ as well).
 
We will prove the following result.
\begin{theorem}
\label{weakMT-t}
For any $Q$\textup, $Q\ge4$\textup, there is a weight $w\in A_1$ with $\Qq=Q$ such that constant $C(Q)$ 
from \eqref{weakMT} satisfies
$$
C(Q)\ge\frac1{515}Q (\log Q)^{1/3}.
$$
\end{theorem}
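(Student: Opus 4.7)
The plan is to construct an explicit $A_1$ weight $w$ with $\Qq=Q$ together with a test function $\vf$, sign pattern $\eps$, and level $\la$ that realize the claimed lower bound. I will frame the argument as a lower bound for a suitable Bellman function, following the strategy outlined in the introduction: express the weak-type constant through a Bellman function $\Bel$ and bound it from below by exhibiting a subsolution of the homogeneous \ma equation that $\Bel$ satisfies in its region of interest.

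\textbf{The Bellman function and its main inequality.} I would define $\Bel(\bff)$ on a domain parametrized by $\bff=(\av\vf I,\av{|\vf|w}I,\av wI,\inf_I w,\la)$ (after the obvious homogeneity normalization that eliminates one degree of freedom), with value equal to the supremum of $\frac1{|I|}w\{t\in I\colon T_\eps\vf(t)\ge\la\}$ over all admissible triples $(w,\vf,\eps)$ realizing these averages, the $A_1$ restriction being $\av wI\le Q\inf_I w$. Splitting $I$ into its dyadic halves and optimizing independently on each half yields the standard midpoint concavity of $\Bel$. Because the extremal configurations in this weak-type setting can move along rank-one directions at every step (i.e., only one of the free parameters is ``spent'' at each scale), the natural obstacle the main inequality produces is the homogeneous \ma equation $\det D^2\Bel=0$ inside the admissible region.

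\textbf{Producing a lower bound.} To bound $\Bel$ from below, it suffices to exhibit any function $M$ that obeys the same midpoint concavity and is dominated by the correct obstacle on the boundary of the admissible region. I would construct $M$ as the value function of an explicit dyadic iteration: at each of $N$ scales we devote one child of the current interval to a carefully tuned ``bump'' of $w$ and $\vf$ that produces a definite contribution to $w\{T_\eps\vf\ge\la\}$, and continue the construction inside the other child. Writing this recursion in differential form along characteristics of the \ma equation collapses it to a one-dimensional ODE; integrating this ODE gives an explicit $M$. Evaluating $M$ at the point corresponding to a constant weight and an $L^1$-normalized $\vf$ then produces the quantitative lower bound for $\Bel$, and hence for $C(Q)$.

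\textbf{Main obstacle.} The genuine difficulty lies in calibrating this construction so that $\Qq$ stays bounded by $Q$ while the weak-type ratio grows as $Q(\log Q)^{1/3}$. The linear factor $Q$ is produced by a single sufficiently large bump; the extra $(\log Q)^{1/3}$ must be assembled by compounding contributions across $N\asymp(\log Q)^{1/3}$ scales and is lost as soon as the $A_1$ constraint is enforced too crudely. The exponent $1/3$ emerges from a three-parameter balance---amplitude of $w$, amplitude of $\vf$, and level count $N$---so the crux of the proof is identifying the correct characteristic ODE (equivalently, the correct ansatz for the subsolution $M$) and verifying that its extremal trajectory stays inside the $A_1$ region. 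Once this ODE is in hand, the explicit dyadic construction reduces to a careful but routine computation, and the constant $1/515$ comes out of optimizing numerical parameters in the ODE solution.
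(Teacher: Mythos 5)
Your proposal is a plan, not a proof: every quantitatively essential step is deferred. You never write down the weight $w$, the test function $\vf$, or the sign pattern $\eps$; you never verify that $\Qq\le Q$ for your (unspecified) cascade; and the exponent $1/3$ is asserted to ``emerge from a three-parameter balance'' without any computation. You yourself identify ``the crux of the proof'' as finding the correct characteristic ODE of the \ma equation --- and then do not find it. This is precisely the step the authors declare out of reach: they state explicitly that solving the \ma boundary problem for the weighted Bellman function is too difficult, and they never produce an extremal cascade either. As written, nothing in your text distinguishes the answer $Q(\log Q)^{1/3}$ from $Q$ or $Q\log Q$; in particular the claim that $N\asymp(\log Q)^{1/3}$ scales suffices is unsupported, and a wrong calibration there would silently destroy the logarithm while leaving your outline intact.

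There is also a logical error in your criterion for lower bounds. A function $M$ satisfying the midpoint concavity and lying below the obstacle does \emph{not} minorize $\Bel$: concavity together with the obstacle condition is the \emph{supersolution} criterion and yields $\Bel\le M$ (Lemma \ref{sup_sol}). To get $\Bel\ge M$ you must either exhibit admissible triples realizing $M$, or run the foliation argument (linearity of $M$ along extremal segments whose endpoints lie on the boundary where the value is attained, combined with concavity of $\Bel$ itself), as in the proof of Theorem \ref{B_no_weight}. The paper in fact proceeds quite differently from your sketch: it uses only one trivial explicit configuration (a two-level bump giving $\cB(x)\ge(2x_4-1)/4$ when $2x_3+x_1\ge1$, Lemma \ref{PodpirPrimer}) and then extracts the $(\log Q)^{1/3}$ blow-up \emph{indirectly}, via Aleksandrov differentiability of the concave function $\cB$, the averaged function $\beta$, the determinant condition \eqref{det} for the reduced $2\times2$ Hessian, and two integrations in $x_1$ in which the logarithm arises from $\int_0^{x_1}dt/(t+x_3/x_4)$. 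If you wish to pursue the explicit-cascade route instead, you must actually construct the cascade and verify the $A_1$ constraint scale by scale; nothing in your proposal does this.
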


In~\cite{LOP} the following estimate from above has been proved:

\begin{theorem}
\label{LOP-t}
There is a positive absolute constant $c$ such that for any weight $w\in A_1$ 
estimate~\eqref{weakMT} holds with
$$
C(\,\Qq)=c\,\Qq\log \Qq\,.
$$
\end{theorem}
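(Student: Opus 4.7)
By homogeneity reduce to $\la=1$ and $\vf\ge 0$; set $Q=\Qq\ge 2$. The strategy is a standard dyadic Calder\'on--Zygmund decomposition at level $\alpha$ of order $1/\log Q$, combined with a sharp P\'erez-type $L^p(w)$ bound for the martingale transform in which the exponent $p$ is chosen just above $1$. The logarithmic factor in the final constant arises from this choice of $p$.

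Perform the CZ decomposition of $\vf$ at level $\alpha:=1/\log Q$: let $\{I_j\}$ be the maximal dyadic intervals with $\av{\vf}{I_j}>\alpha$, put $\Omega:=\bigcup_j I_j$, and split $\vf=g+b$ with
$$
g=\vf\,\chi\cii{\Omega^c}+\sum_j\av{\vf}{I_j}\chi\cii{I_j}\,,\qquad b=\sum_j(\vf-\av{\vf}{I_j})\chi\cii{I_j}\,.
$$
Then $|g|\le 2\alpha$ pointwise and each $b_j$ has mean zero with support in $I_j$; since $h\cii J$ is constant on $I_j$ whenever $J\supseteq I_j$, the Haar coefficient $(b_j,h\cii J)$ vanishes for all such $J$, so $T_\eps b_j$ is supported in $I_j$ and $\{|T_\eps b|>1/2\}\subset\Omega$. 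The $A_1$ bound $w(I_j)\le Q|I_j|\inf_{I_j}\!w$, combined with $|I_j|<\alpha^{-1}\int_{I_j}\vf$, then yields $w(I_j)\le(Q/\alpha)\int_{I_j}\vf w$, and summation over the disjoint $I_j$ gives the desired bad-part estimate
$$
w(\Omega)\le\frac{Q}{\alpha}\|\vf w\|_{L^1}=Q\log Q\,\|\vf w\|_{L^1}\,.
$$

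For the good part invoke the sharp P\'erez-type $L^p(w)$ estimate $\|T_\eps h\|_{L^p(w)}\le C p'[w]_{A_1}\|h\|_{L^p(w)}$ for every $1<p<\infty$, which in the dyadic setting follows from Wittwer's $A_2$ theorem via Rubio de Francia extrapolation together with $[w]_{A_p}\le[w]_{A_1}$ whenever $w\in A_1$. By Chebyshev,
$$
w\{|T_\eps g|>1/2\}\le 2^p\|T_\eps g\|_{L^p(w)}^p\le(2Cp'Q)^p(2\alpha)^{p-1}\|g\|_{L^1(w)}\,.
$$
With $p=1+1/\log Q$ one has $p'\approx\log Q$, $(Cp'Q)^p\approx Q\log Q$, and $(2\alpha)^{p-1}\to 1$ as $Q\to\infty$; provided $\|g\|_{L^1(w)}\lesssim\|\vf w\|_{L^1}$, the good part also contributes at most $O(Q\log Q\,\|\vf w\|_{L^1})$, which matches the bad-part bound and completes the proof.

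The one delicate point is precisely the inequality $\|g\|_{L^1(w)}\lesssim\|\vf w\|_{L^1}$: a naive application of $A_1$ to $\sum_j\av{\vf}{I_j}w(I_j)$ loses an extra factor of $Q$ that would spoil the entire estimate. I expect this to be the main obstacle. Two routes are available to bypass it. The first is to redefine $g$ so that on each $I_j$ the Lebesgue average is replaced by the $w$-weighted average $\frac{1}{w(I_j)}\int_{I_j}\vf w$; the resulting Lebesgue-mean discrepancy is then absorbed into an auxiliary bad part which is handled by the reverse H\"older inequality for $A_1$ weights with exponent $1+c/Q$. The second is to dispense with the good/bad splitting and control $T_\eps\vf$ pointwise by a sparse form, then use a Coifman--Fefferman style dominating inequality $|T_\eps\vf|\lesssim M_{L\log L}\vf$ together with the endpoint bound $\|M_{L\log L}\|_{L^1(w)\to L^{1,\infty}(w)}\lesssim Q\log Q$ for $A_1$ weights. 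Either way, the logarithmic price paid on the maximal-function side is the true source of the $\log Q$ factor appearing in~\eqref{weakMT}.
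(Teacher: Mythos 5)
You should first note that the paper does not prove Theorem~\ref{LOP-t} at all: it is quoted from \cite{LOP}, so your proposal has to stand on its own (it is in effect an attempt to reconstruct the Lerner--Ombrosi--P\'erez argument). Its skeleton --- dyadic Calder\'on--Zygmund decomposition, trivial localization of $T_\eps b$ to $\Omega$, and a sharp $L^p(w)$ bound applied with $p=1+1/\log Q$ --- is indeed the standard route, but both pillars it rests on are defective. First, the strong-type input $\|T_\eps\|_{L^p(w)\to L^p(w)}\le C\,p'\,[w]_{A_1}$ does \emph{not} follow from Wittwer's $A_2$ theorem via sharp extrapolation together with $[w]_{A_p}\le[w]_{A_1}$: sharp extrapolation from a linear $A_2$ bound gives $\|T_\eps\|_{L^p(w)}\lesssim [w]_{A_p}^{1/(p-1)}$ for $1<p<2$, so majorizing $[w]_{A_p}$ by $[w]_{A_1}=Q$ yields $Q^{1/(p-1)}=Q^{\log Q}$ for your choice of $p$, which is useless. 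The mixed bound $c\,p\,p'\,[w]_{A_1}$ is precisely the main strong-type theorem of \cite{LOP} and requires its own argument (a quantitative Coifman--Fefferman/Buckley-type scheme using $A_1\subset A_\infty$); citing extrapolation here is a genuine gap, not a shortcut.

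Second, the step you flag yourself, $\|g\|_{L^1(w)}\lesssim\|\vf w\|_{L^1}$, really does fail by a factor of $Q$ for the standard good part, and neither proposed repair works. Replacing Lebesgue averages by $w$-averages on the $I_j$ restores $\|g\|_{L^1(w)}\le\|\vf\|_{L^1(w)}$ but destroys the two properties your estimate uses: $g$ is no longer bounded by $2\alpha$ (the $A_1$ condition controls $\langle w\rangle_{I_j}$ by $\inf_{I_j}w$, not by $\sup_{I_j}w$, so the $w$-average of $\vf$ over $I_j$ can be of the order of $\sup_{I_j}\vf$), and the new $b_j$ loses its zero Lebesgue mean, so $(b_j,h_J)\ne 0$ for $J\supsetneq I_j$ and $T_\eps b_j$ is no longer supported in $I_j$; the appeal to reverse H\"older with exponent $1+c/Q$ is too vague to fill this hole. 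The alternative route is false as stated: there is no pointwise domination $|T_\eps\vf|\lesssim M_{L\log L}\vf$ --- already for bounded $\vf$ the transform $T_\eps\vf$ is in general unbounded (it lies only in dyadic BMO) while $M_{L\log L}\vf\lesssim\|\vf\|_\infty$; the correct Coifman--Fefferman statements are norm inequalities, and passing instead to sparse-form domination returns you to proving a $Q\log Q$ weak $(1,1)$ bound for sparse operators, i.e.\ essentially the original problem. So as written the proof is incomplete at both decisive points.
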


\begin{remark}
The sharp  power remains enigmatic.
\end{remark}

\subsection{Two problems of Muckenhoupt}
Theorem \ref{weakMT-t} is a subtle result and it will take some space below to prove. Recall that Muckenhoupt 
conjectured that for the Hilbert transform $H$ and any weight $w\in A_1$ the following 
two estimates hold on a unit interval $I$:
\begin{equation}
\label{MstrongConj}
w\{x\in I: |Hf(x)| >\la\} \le \frac{C}{\la} \int_I |f| Mw dx\,,
\end{equation}
\begin{equation}
\label{MweakConj}
w\{x\in I: |Hf(x)| >\la\} \le \frac{C\, \Qq}{\la} \int_I |f| w dx\,.
\end{equation}

Obviously if \eqref{MstrongConj} holds then \eqref{MweakConj} is valid as well.
It took many years to {\it disprove} \eqref{MstrongConj}. This was done by Maria 
Reguera and Christoph Thiele \cite{R} (for the martingale transform), \cite{RT} (for the Hilbert transform). The constructions involve a very 
irregular (almost a sum of delta measures) weight $w$, so there was a hope that such 
an effect cannot appear when the weight is regular in the sense that $w\in A_1$. 
Theorem~\ref{weakMT-t} gives a counterexample to this hope for the case when the Hilbert 
transform is replaced by the martingale transform on a usual homogeneous dyadic filtration.  
The reader can consult~\cite{NRVV} to see that for the Hilbert transform a counterexample 
also exists, and so~\eqref{MweakConj} fails as well. The counterexample  for the Hilbert 
transform is the transference of a counterexample we build here
for the martingale transform.  It will be published separately. The blow-up estimate $Q(\log Q)^{1/3}$ holds for the Hilbert transform as well.

Theorem~\ref{logQthm} below implicitly gives a certain counterexample for the Hilbert 
transform, but it takes some work to see that. We will explain in a separate note how to make this transference.

\subsection{Plan of the paper} The main results and main difficulties in getting them are in Section \ref{PropBEL-MT}. 
This is where the weighted problem is considered and $A_1$  question of Muckenhoupt is answered in 
the negative. But we start with unweighted case, where the weight $w$ is just identically $1$. 
This is done in Section \ref{MT} below. It has its own interest because of unexpected Corollary \ref{const1}.
But also it serves the goal by preparing the reader to the more sophisticated reasoning
 in Section \ref{PropBEL-MT}. It also has the following feature: in 
 Section \ref{PropBEL-MT} we are unable to find exactly the corresponding Bellman function, 
 we just managed to estimate this difficult object. But in a simpler, unweighted case of Section 
 \ref{MT} we find the formula  for the corresponding simpler Bellman function. Incidentally
  it serves as a boundary value function for the weighted Bellman function of Section \ref{PropBEL-MT} whose precise formula eludes us.

\bigskip
\section{Unweighted estimate of the martingale transform}
\label{MT}

In this Section we prove the following unweighted analog of inequality~\eqref{weakMT}
\eq[noW-weakMT]{
\frac1{|I|}\big|\{t\in I\colon\sum_{J\in\cD(I)}\!\eps\cii J(\vf,h\cii J)h\cii J(t)\ge\la\}\big| 
\le 2\frac{\av{|\vf|}I}{\la}\,.
}

We will work not on the whole $\bR$ but on a finite interval. The result for the whole axis
can be obtain by enlarging the underlying interval and the fact that the estimates will not depend
on the interval. So, we are working on $I=[0,1]$. The symbol $\cD=\cD(I)$ means the dyadic lattice
of subintervals. Let $\vf$ be a dyadic martingale starting at $x_1$ and $\psi$ is its martingale 
transform, starting at $x_2$, i.\,e.,
$$
\vf=x_1+\!\sum_{J\in\cD(I)}\!\!(\vf, h\cii J) h\cii J\,, \qquad
\psi=x_2+\!\sum_{J\in\cD(I)}\!\!\eps\cii J(\vf,h\cii J)h\cii J\,.
$$
We consider two classes of martingale transforms: 1) the case of $\pm$-trans\-forms, i.\,e. 
the case when we assume that $\eps\cii J=\pm1$; and 2) the case when the martingale $\psi$
is differentially subordinate to $\vf$, i.\,e. the case when we assume that $|\eps\cii J|\le1$.
The first class of admissible pairs $\{\vf,\,\psi\}$ we denote by $\mfA_\pm$, the second one by
$\mfA_\eps$.
 
The desired estimate we deduce to estimating a certain function of three variables
related to our inequality, which is called the Bellman function of the problem. In fact the
Bellman function related to some inequality is simply the extremal value of the quantity we
need to estimate under several fixed parameters related to the problem. Describe now the
Bellman function of our problem.

With every pair of functions $\{\vf,\,\psi\}$ on $I$ we associate the so called Bellman point
$\bp{\vf,\psi}=x=(x_1,x_2,x_3)$ with coordinates
$$
x_1=\av\vf{I},\qquad x_2=\av\psi{I},\qquad x_3=\av{|\vf|}I.
$$
The set of all admissible pairs corresponding to a point $x$ will be denoted by $\mfA_\pm(x)$
in the case of $\pm$transform and by $\mfA_\eps(x)$ in the case of differential subordination.
Our Bellman function is the following one:
\begin{equation}
\label{def_B}
\Bel(x)=\Bel(x_1,x_2,x_3):= \sup_{\mfA(x)}\,\frac1{|I|}\big|\{t\in I\colon
\!\!\sum_{J\in\cD(I)}\!\!\psi(t)\ge0\}\big|\,,
\end{equation}
where $\mfA(x)$ is either $\mfA_\pm(x)$ or $\mfA_\eps(x)$.
If we would like to specify that we speak about $\pm$-transform, i.\,e. supremum is
taken over the set $\mfA=\mfA_\pm$, then the corresponding Bellman function will be
written as $\Bel_\pm$, and we shall write $\Bel_\eps$ if $\mfA=\mfA_\eps$. This index
will be omitted in any assertion valid in both cases. It is clear, that $\Bel_\pm\le\Bel_\eps$,
but as we will see at the end these two functions coincide.
Note that the function $\Bel$ should not be indexed by $I$ because it is easy to check that this
function does not depend on $I$.

\subsection{Properties of $\Bel$}
\label{properties}

\subsubsection{Domain and Range} 
Formally the definition of $\Bel$ is correct for arbitrary $x\in\bR^3$, but there is no sense
to consider $\Bel$ at the points where the set of admissible functions is empty, and therefore
the corresponding supremum is $-\infty$. We would like to consider the function $\Bel$ on
the domain $\Omega\subset \bR^3$: 
$$
\Omega\df\{x=(x_1,x_2,x_3)\in\bR^3: |x_1|\le x_3\}\,.
$$

For any $x\in\Omega$, the set of test functions $\mfA(x)$ is not empty and it is immediately clear from 
the definition that
$$
0\le\Bel(x)\le1\,.
$$

\subsubsection{Symmetry} The function $\Bel$ is invariant under reflection with
respect $x_1$:
$$
\Bel(-x_1,x_2,x_3)=\Bel(x_1,x_2,x_3)\,,
$$
because if $\bp{\vf,\psi}=(x_1,x_2,x_3)$, then $\bp{-\vf,\psi}=(-x_1,x_2,x_3)$.

\subsubsection{Homogeneity}
$$
\Bel(\tau x_1,\tau x_2,\tau x_3)=\Bel(x_1,x_2,x_3)\,,\qquad\tau>0\,,
$$
because if $\bp{\vf,\psi}=(x_1,x_2,x_3)$, then $\bp{\tau\vf,\tau\psi}=(\tau x_1,\tau x_2,\tau x_3)$, and the functions
$\psi$ and $\tau\psi$ are positive simultaneously.

\subsubsection{Boundary condition}
\begin{equation}
\label{bc}
\Bel(0,x_2,0)=
\begin{cases}
1\,,&\text{if }\;x_2\ge0\,,
\\
0\,,&\text{if }\;x_2<0\,,
\end{cases}
\end{equation}
because the only admissible pair for the point $x=(0,x_2,0)$ is $\vf=0$,
$\psi=x_2$.

\subsubsection{Obstacle condition}
\begin{equation}
\label{obst_cond}
\Bel(x_1,x_2,|x_1|)\ge
\begin{cases}
1\,,&\text{if }\;x_2\ge0\,,
\\
0\,,&\text{if }\;x_2<0\,,
\end{cases}
\end{equation}
because the pair of constant functions $\vf=x_1$, $\psi=x_2$ is an admissible pair for the 
point $x=(x_1,x_2,|x_1|)$.

By the way, since $\Bel\le1$ by the definition, the obstacle condition supplies us with
the function on the half of the boundary, namely, $\Bel(x)=1$ if $x$ is on the boundary and 
$x_2\ge0$. We shall see soon that this is not the whole part of the boundary where $\Bel(x)=1$. 
However first we derive the main inequality.

\subsection{Main inequality}
\label{MI}

\begin{lemma}
\label{MTtuda} 
Let $x^\pm$ be two points in $\Omega$ such that
\begin{itemize}
\item
$|x^+_2-x^-_2|=|x^+_1-x^-_1|$ in the case $\Bel=\Bel_\pm$\textup;
\item
$|x^+_2-x^-_2|\le|x^+_1-x^-_1|$ in the case $\Bel=\Bel_\eps$\textup,
\end{itemize}
and $x=\frac12(x^++x^-)$. Then
\begin{equation}
\label{mi} \Bel(x)-\frac{\Bel(x^+)+\Bel(x^-)}2\ge 0\,.
\end{equation}
\end{lemma}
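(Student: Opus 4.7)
The plan is the standard Bellman-function gluing argument. Fix $\delta > 0$ and invoke the definition of $\Bel(x^\pm)$ to select admissible pairs $(\vf^\pm, \psi^\pm) \in \mfA(x^\pm)$ on $I$ with
$\frac{1}{|I|}|\{t\in I\colon \psi^\pm(t) \ge 0\}| \ge \Bel(x^\pm) - \delta$. Since $\Bel$ does not depend on the underlying interval (as noted just after \eqref{def_B}), I would rescale these two pairs so that they live on the two dyadic halves $I^+$ and $I^-$ of $I$ respectively; the rescaling preserves both the Bellman coordinates and the normalized measure of the level set $\{\psi^\pm \ge 0\}$.

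Next I would glue the rescaled pairs, setting $\vf := \vf^+ \chi\cii{I^+} + \vf^- \chi\cii{I^-}$ and $\psi := \psi^+ \chi\cii{I^+} + \psi^- \chi\cii{I^-}$. Because $x = \half(x^+ + x^-)$, the three averages $\av{\vf}{I}$, $\av{\psi}{I}$, $\av{|\vf|}{I}$ land exactly at $x_1, x_2, x_3$, so $\bp{\vf,\psi} = x$. The only thing to check is that $\psi$ is an honest martingale transform of $\vf$ of the required class, so that $(\vf,\psi)\in\mfA(x)$. On the sub-lattices $\cD(I^\pm)$ this is inherited from the original pairs; the single new pair of martingale differences lives on $I$ itself, namely
$$
\Delta\cii I \vf = \half (x_1^+ - x_1^-)(\chi\cii{I^+} - \chi\cii{I^-}), \qquad
\Delta\cii I \psi = \half (x_2^+ - x_2^-)(\chi\cii{I^+} - \chi\cii{I^-}).
$$
The hypotheses of the lemma are engineered precisely so that a multiplier $\eps\cii I$ of the right class exists with $\Delta\cii I \psi = \eps\cii I\, \Delta\cii I \vf$: the equality $|x_2^+ - x_2^-| = |x_1^+ - x_1^-|$ lets one take $\eps\cii I = \pm 1$, placing the glued pair in $\mfA_\pm$, while the inequality $|x_2^+ - x_2^-| \le |x_1^+ - x_1^-|$ lets one take $|\eps\cii I| \le 1$, placing it in $\mfA_\eps$. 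If $x_1^+ = x_1^-$, both hypotheses force $x_2^+ = x_2^-$ too, and any admissible $\eps\cii I$ works.

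Finally, the level set $\{t\in I\colon\psi(t)\ge 0\}$ decomposes as the disjoint union of $\{\psi^+\ge 0\}\cap I^+$ and $\{\psi^-\ge 0\}\cap I^-$, so
$$
\frac{1}{|I|}\bigl|\{t\in I\colon \psi(t)\ge 0\}\bigr|
= \half\cdot\frac{|\{\psi^+\ge 0\}|}{|I^+|} + \half\cdot\frac{|\{\psi^-\ge 0\}|}{|I^-|}
\ge \frac{\Bel(x^+)+\Bel(x^-)}{2} - \delta.
$$
Taking the supremum over admissible pairs for $x$ on the left and then letting $\delta \to 0$ yields \eqref{mi}. The sole place the argument can fail is the selection of $\eps\cii I$ at the glued level, and the two cases of the hypothesis are tailored exactly for that; the rest is routine bookkeeping on the averages of $\vf$, $\psi$, and $|\vf|$.
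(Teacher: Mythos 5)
Your proposal is correct and follows essentially the same route as the paper's proof: choose near-optimal admissible pairs for $x^\pm$, transplant them to the two halves $I^\pm$, glue, observe that the only new martingale difference is at the level of $I$ itself where the hypothesis on $|x_2^+-x_2^-|$ versus $|x_1^+-x_1^-|$ supplies the admissible multiplier $\eps\cii I$, and then split the level set and pass to the limit. No gaps.
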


\begin{proof}
Fix $x^\pm\in\Omega$, and let $\vf^\pm,\;\psi^\pm$ be two pairs of test functions giving the supremum
in $\Bel(x^+)$, $\Bel(x^-)$ respectively up to a small number $\eta>0$. Using the fact that the
function $\Bel$ does not depend on the interval where the test functions are defined, we 
assume that $\vf^+$, $\psi^+$ are supported on $I^+$ and $\vf^-$, $\psi^-$  are on $I^-$,
where $I^\pm$ are two halves of the interval $I$:
$$
\vf^\pm=x^\pm_1+\!\!\!\!\sum_{J\in\cD(I^\pm)}\!\!\!\!a\cii J h\cii J\,, \qquad 
\psi^\pm=x^\pm_2+\!\!\!\!\sum_{J\in\cD(I^\pm)}
\!\!\!\!\eps\cii J a\cii Jh\cii J\,.
$$
And we assume that for these functions the estimates
$$
\frac1{|I^\pm|}\big|\{t\in I^\pm\colon \psi^\pm(t)\ge0\}\big|
\ge\Bel(x^\pm)-\eta
$$
hold. Consider the functions
$$
\vf(t):=
\begin{cases}
\vf^+(t)\,,&\text{if }\;t\in I^+
\\
\vf^-(t)\,,&\text{if }\;t\in I^-
\end{cases}
=\frac{x^+_1+x^-_1}2+\frac{x^+_1-x^-_1}2h\cii I+ 
\!\!\!\!\sum_{J\in\cD(I)}\!\!\!\!a\cii Jh\cii J
$$
and
$$
\psi(t):=
\begin{cases}
\psi^+(t)\,,&\text{if }\;t\in I^+
\\
\psi^-(t)\,,&\text{if }\;t\in I^-
\end{cases}
=\frac{x^+_2+x^-_2}2+\frac{x^+_2-x^-_2}2h\cii I+
\!\!\!\!\sum_{J\in\cD(I)}\!\!\!\!\eps\cii Ja\cii Jh\cii J\,.
$$
Under our assumption about relation between $|x^+_1-x^-_1|$ and $|x^+_2-x^-_2|$ we have 
$\{\vf,\psi\}\in\mfA_\pm$ in the first case and $\{\vf,\psi\}\in\mfA_\eps$ in the second one,
i.\,e. in each case this is an admissible pair of the test
functions corresponding to the point $x$. Therefore,
\begin{align*}
\Bel(x)&\ge\frac1{|I|}\big|\{t\in I\colon \psi(t)\ge0\}\big|
\\
&=\frac1{2|I^+|}\big|\{t\in I^+\colon \psi(t)\ge0\}\big|+
\frac1{2|I^-|}\big|\{t\in I^-\colon \psi(t)\ge0\}\big|
\\
&\ge\frac12\Bel(x^+)+\frac12\Bel(x^-)-\eta.
\end{align*}
Since this inequality holds for an arbitrarily small $\eta$, we can pass to the
limit $\eta\to0$, what gives us the required assertion.
\end{proof}

It will be convenient to change variables $x_1=y_1-y_2$,
$x_2=y_1+y_2$, $x_3=y_3$ and introduce a function $\bfM(y)\df\Bel(x)$ defined in the
domain $G\df\{y\in\bR^3\colon |y_1-y_2|\le y_3\}$. Then the main inequality for the function
$\bfM_\pm$ means that it is concave if either $y_1$ is fixed, or $y_2$ is fixed. For the function
$\bfM_\eps$ the condition is more restrictive: it is concave in any direction from the cone
$(y_1^+-y_1^-)(y_2^+-y_2^-)\le0$, since 
$$
(x_2^+-x_2^-)^2-(x_1^+-x_1^-)^2=4(y_1^+-y_1^-)(y_2^+-y_2^-)\,.
$$

\subsection{Supersolution.}
\label{supersol}

\begin{lemma}
\label{sup_sol}
Let $B$ a continuous function on $\Omega$ satisfying the main
inequality~\eqref{mi} and the obstacle condition~\eqref{obst_cond}. Then $\Bel(x)\le B(x)$.
\end{lemma}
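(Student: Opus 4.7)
The proof plan is the standard Bellman-function supersolution argument: iterate the main inequality along the dyadic tree and close the induction using the obstacle condition on the boundary of $\Omega$.

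First I would fix any admissible pair $\{\vf,\psi\}\in\mfA(x)$ with finitely many nonzero Haar coefficients (as the paper allows). For every $J\in\cD(I)$ define its Bellman point
$$
x^J=\bigl(\av{\vf}{J},\av{\psi}{J},\av{|\vf|}{J}\bigr)\in\Omega.
$$
Because $|J^+|=|J^-|=|J|/2$, we have $x^J=\tfrac12(x^{J^+}+x^{J^-})$. The martingale transform relation $\Delta_J\psi=\eps_J\Delta_J\vf$ together with the formula for $\Delta_J$ gives
$$
x_2^{J^+}-x_2^{J^-}=\eps_J\bigl(x_1^{J^+}-x_1^{J^-}\bigr),
$$
so $|x_2^{J^+}-x_2^{J^-}|=|x_1^{J^+}-x_1^{J^-}|$ when $\eps_J=\pm1$ and $|x_2^{J^+}-x_2^{J^-}|\le|x_1^{J^+}-x_1^{J^-}|$ when $|\eps_J|\le1$. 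Thus the pair $(x^{J^+},x^{J^-})$ satisfies exactly the hypothesis of the main inequality in the appropriate case.

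Next I would apply \eqref{mi} to $B$ at the root $I$ and iterate to generation $n$:
$$
B(x)=B(x^I)\ge\frac1{2^n}\sum_{J\in\cD_n(I)}B(x^J).
$$
Choosing $n$ large enough that both $\vf$ and $\psi$ are constant on every $J\in\cD_n(I)$, the third coordinate satisfies $\av{|\vf|}{J}=|\av{\vf}{J}|$, so $x^J$ lies on the boundary $\{|x_1|=x_3\}$. The obstacle condition \eqref{obst_cond} therefore yields
$$
B(x^J)\ge\chi_{\{\av{\psi}{J}\ge0\}}.
$$
Since $\psi$ is constant on $J$, the right-hand side equals $1$ precisely when $\psi|_J\ge0$, hence
$$
B(x)\ge\frac1{2^n}\sum_{J\in\cD_n(I)}\chi_{\{\av{\psi}{J}\ge0\}}
=\frac{|\{t\in I\colon\psi(t)\ge0\}|}{|I|}.
$$

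Taking the supremum over all admissible pairs in $\mfA(x)$ gives $B(x)\ge\Bel(x)$, as required. The only real point to verify carefully is that the admissibility constraint transfers correctly to the Bellman points at each node of the tree; once that is done, continuity of $B$ is not even needed for the finite-sum case (it is available if one wishes to pass through an approximation by truncating $\vf$ to its partial Haar sums and using the continuous dependence of the Bellman points). There is no substantial obstacle beyond this verification.
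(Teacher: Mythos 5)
Your argument is correct and is essentially the paper's own proof: iterate the main inequality down the dyadic tree (after checking, as you do, that the subordination constraint transfers to the Bellman points at each node) and apply the obstacle condition at the leaves. The only difference is that you terminate at a finite generation using the paper's standing finite-Haar-sum convention, whereas the paper treats general admissible pairs by letting $n\to\infty$, using a.e.\ convergence of $x^{(n)}(t)$ to $(\vf(t),\psi(t),|\vf(t)|)$ together with continuity of $B$ and the truncation $\min\{B,1\}$ to pass to the limit in the integral.
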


\begin{proof}
Let us fix a point $x\in\Omega$ and a pair of admissible functions $\vf$, $\psi$
on $I=[0,1]$ corresponding to $x$, i.\,e., $\bp{\vf,\psi}=x$. Let us introduce a temporary notation
$f\cii J$ for the restriction of the function $f$ on the interval $J$. Using consequently main
inequality for the function $B$ we can write down the following chain of
inequalities
\begin{align*}
B(\bp{\vf,\psi})&\ge\frac12\big(B(\bp{\vf\cii{I^+}\!,\psi\cii{I^+}})+B(\bp{\vf\cii{I^-}\!,\psi\cii{I^-}})\big)
\\
&\ge\sum_{J\in\cD,\,|J|=2^{-n}}\frac1{|J|}B(\bp{\vf\cii J,\psi\cii J})=\int_0^1B(x^{(n)}(t))dt\,,
\end{align*}
where $x^{(n)}(t)=\bp{\vf\cii J,\psi\cii J}$, if $t\in J$, $|J|=2^{-n}$.

Note that without loss of generality we may assume that $B(x)\le1$ and the conclusion would be only stronger. 
Indeed, we can consider the function $\tilde B=\min\{B,1\}$. And this new function satisfies both the concavity condition~\eqref{mi} 
and the obstacle condition~\eqref{obst_cond} with $B(x_1,x_2,|x_1|)=1$ for $x_2\ge0$. Thus, 
since $x^{(n)}(t)\!\to\!(\vf(t),\psi(t),|\vf(t)|)$ almost everywhere (at any common Lebesgue point $t$ 
of the functions $\vf$ and $\psi$), we can pass to the limit in the integral. So, we come to the inequality
\eq[upest]{
B(x)\ge\!\int_0^1\!B(\vf(t),\psi(t),|\vf(t)|)dt
\ge\!\!\!\!\int\limits_{\{t\colon\psi(t)\ge0\}}\!\!\!\!\!\!\!dt\;
=\,\big|\{t\in I\colon\psi(t)\ge0\}\big|\,,
}
where we have used the property $B(x_1,x_2,|x_1|)=1$ for $x_2\ge0$. Now, taking
supremum in~\eqref{upest} over all admissible pairs $\vf$, $\psi$, we get the
required estimate $B(x)\ge\Bel(x)$.
\end{proof}

Now we explain how we will apply this lemma. For a given sequence $\eps=\{\eps\cii J\}$, we denote
$$
T_\eps\vf\df\sum_{J\in\cD(I)}\eps\cii J(\vf, h\cii J) h\cii J(x)\,.
$$
It is a dyadic singular operator (actually, it is a family of operators
enumerated by sequences $\eps$). To prove that this family is uniformly of weak type
$(1,1)$ is the same as to prove
$$
\Bel(x)\le \frac{C\,x_3}{|x_2|}\,.
$$
Indeed, if $\vf$, $\psi$ is an admissible pair corresponding to the point $x$,
then $T_\eps\vf=\psi-x_2$. Therefore, for a given $\vf$ with $\av\vf{}=x_1$ and
$\av{|\vf|}{}=x_3$ the best estimate of the value $|\{t\colon T_\eps\vf\ge\lambda\}|$
gives us the function $\Bel(x)$ with $x_2=-\lambda$. Thus, would we find any
function $B$ with the required estimate and satisfying conditions of
Lemma~\ref{sup_sol} we immediately get the needed weak type $(1,1)$, and
in fact, more precise information on the level set of $T_\eps\vf$.

\subsection{The Bellman function on the boundary}
\label{exact}

First of all we note that the boundary $\pd\Omega$ consists of two independent parts
\begin{align*}
\pd\Omega_+&\df\{x=(x_1,x_2,x_1)\colon x_1\ge0, -\infty<x_2<+\infty\}
\qquad\text{and}
\\
\pd\Omega_-&\df\{x=(x_1,x_2,-x_1)\colon x_1\le0, -\infty<x_2<+\infty\}.
\end{align*}
They are independent in the following sense. If we have a pair of test functions $\vf,\,\psi$
whose Bellman point $x=\bp{\vf,\psi}$ is on the boundary (whence the sign of $\vf(t)$ is 
constant on the whole interval), then after splitting the interval we get a pair of Bellman 
points $x^\pm$ from the same part of the boundary. So, the main inequality~\eqref{mi} has to
be fulfilled separately on $\pd\Omega_+$ and $\pd\Omega_-$. Due to the symmetry condition it
is sufficient to find the function, say, on $\pd\Omega_+$ and further we assume that $x_1\ge0$.

So we look for a minimal function on the half-plane $\{x_1\ge0\}$ satisfying 
the main inequality and the boundary condition~\eqref{bc}. We pass to the 
variable $y$ ($x_1=y_1-y_2$ and $x_2=y_1+y_2$) and look for a function $M$ in the half-plane 
$y_2<y_1$, which satisfies the main inequality (i.\,e. is concave in each variable: in $y_1$, when 
$y_2$ is fixed, and in $y_2$, when $y_1$ is fixed) and with the given values on the boundary
$y_2=y_1$: $\bfM=1$ if $y_1=y_2\ge0$ and $\bfM=0$ if $y_1=y_2<0$.

First, we use concavity of $\bfM$ with respect to $y_2$ for some fixed $y_1\ge0$.
Concave function bounded from below cannot decrease, therefore it has to be identically $1$
on any such ray due to fixed boundary condition. It remains to find $\bfM$ in the domain
$y_2<y_1<0$. Here we use concavity along $y_1$. We know that our function is $0$ at $y_1=y_2$
and, by what we just said, it is $1$ at $y_1=0$, therefore between these two points it is at 
least the linear function $M=1-\frac{y_1}{y_2}$, i.\,e. $\bfM\ge M$, where
$$
M=\begin{cases}
\quad1,&\text{ if }\ y_1\ge0,
\\
1-\frac{y_1}{y_2},&\text{ if }\ y_1<0.
\end{cases}
$$
To prove the opposite inequality we note that $M$ is concave in each variable and it satisfies
the obstacle condition. Therefore, Lemma~\ref{sup_sol} guarantees the required inequality $\bfM_\pm\le M$.
To prove that $\bfM_\eps\le M$ we need to check a bit stronger concavity along any direction from the cone
$(y_1^+-y_1^-)(y_2^+-y_2^-)\le0$. This will be made below when considering the function in the whole domain.

Returning to variable $x$, we can write $\Bel=\frac{2x_1}{x_1-x_2}$ in the half-plane $x_1\ge0$.

As a result we have proved the following
\begin{prop}
\label{bc_no_weight}
\begin{equation}
\label{bc_no_w}
\Bel_\pm(x_1,x_2,|x_1|)=B(x_1,x_2,|x_1|)\df
\begin{cases}
\quad1,&\text{ if }\ x_2\ge-|x_1|,
\\
\frac{2|x_1|}{|x_1|-x_2},&\text{ if }\ x_2\le-|x_1|.
\end{cases}
\end{equation}
\end{prop}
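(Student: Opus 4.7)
By the symmetry property of $\Bel$ it suffices to treat the component $\pd\Omega_+ = \{(x_1, x_2, x_1) : x_1 \geq 0\}$. A key preliminary observation is that any admissible pair $(\vf, \psi)$ with Bellman point on $\pd\Omega$ must have $\vf$ of constant sign (because $\av{|\vf|}{I} = |\av{\vf}{I}|$), and this property is preserved under dyadic splitting $I = I^+ \cup I^-$. Hence both the main inequality of Lemma~\ref{MTtuda} and the inductive argument of Lemma~\ref{sup_sol} stay entirely within a single boundary component, so the Bellman analysis on $\pd\Omega_+$ is effectively self-contained and two-dimensional.

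Passing to the variables $y_1 = (x_1+x_2)/2$, $y_2 = (x_2-x_1)/2$, the set $\pd\Omega_+$ becomes the half-plane $\{y_2 \leq y_1\}$. The admissibility constraint $|x_2^+ - x_2^-| = |x_1^+ - x_1^-|$ for $\pm$-transforms forces $(y_1^+ - y_1^-)(y_2^+ - y_2^-) = 0$, so the main inequality~\eqref{mi} translates to separate concavity of $\bfM_\pm(y_1, y_2) \df \Bel_\pm(y_1 - y_2,\; y_1 + y_2,\; y_1 - y_2)$ in each variable. The data along the edge $y_1 = y_2$ (corresponding to $x_1 = 0$) are $\bfM_\pm = 1$ for $y_1 \geq 0$ and $\bfM_\pm = 0$ for $y_1 < 0$, by~\eqref{bc}. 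For the lower bound, I fix $y_1 \geq 0$ and view $\bfM_\pm(y_1, \fdot)$ as a concave function on $(-\infty, y_1]$ equal to $1$ at the right endpoint and taking values in $[0, 1]$; such a function must be identically $1$, for otherwise concavity would drive its values to $-\infty$ as $y_2 \to -\infty$, contradicting the lower bound $0$. This gives $\bfM_\pm \equiv 1$ on $\{y_1 \geq 0\}$, i.e.\ $\Bel_\pm = 1$ whenever $x_2 \geq -|x_1|$. For $y_2 < y_1 < 0$, the chord inequality applied to the concave function $y_1 \mapsto \bfM_\pm(y_1, y_2)$ on $[y_2, 0]$, with values $0$ and $1$ at the endpoints, yields $\bfM_\pm(y_1, y_2) \geq 1 - y_1/y_2 = 2|x_1|/(|x_1| - x_2)$.

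For the matching upper bound, I would verify directly that the candidate function $M$ is linear in $y_1$ (hence concave in $y_1$) and satisfies $\pd^2 M / \pd y_2^2 = -2 y_1 / y_2^3 < 0$ on the nontrivial region $\{y_1 < 0,\, y_2 < 0\}$, so $M$ is concave in each variable separately and matches the boundary data along $y_1 = y_2$. Repeating the Bellman induction of Lemma~\ref{sup_sol} with $M$ in place of $B$, but restricted to admissible pairs whose Bellman points lie on $\pd\Omega_+$, then yields $\bfM_\pm \leq M$. The main technical subtlety is justifying this restricted induction: however, by the invariance observation in the first paragraph, dyadic splitting of a constant-sign pair produces two constant-sign pairs with Bellman points on the same $\pd\Omega_+$, so the inductive step of Lemma~\ref{sup_sol} transfers verbatim to the two-dimensional boundary problem, and the two inequalities together give the claimed formula.
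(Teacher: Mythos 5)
Your argument is correct and follows essentially the same route as the paper: reduce by symmetry to $\pd\Omega_+$, pass to the variables $y_1,y_2$, get $\bfM_\pm\equiv1$ on $\{y_1\ge0\}$ from concavity in $y_2$ plus boundedness, get the lower bound $1-y_1/y_2$ from the chord inequality in $y_1$, and obtain the matching upper bound by running the supersolution induction of Lemma~\ref{sup_sol} on the boundary component, which is legitimate because $x_3=|x_1|$ forces $\vf$ to have constant sign so that dyadic splitting never leaves $\pd\Omega_+$. Your explicit verification of the concavity of the candidate and of the restricted induction fills in details the paper states more briefly, but the method is the same.
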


\subsection{Full Bellman function for the weak type estimate}
\label{FBF}

Now we present the full Bellman function:
\begin{theorem}
\label{B_no_weight}
For the function $\Bel$ defined by~\eqref{def_B} we have the following
analytic expression
\begin{equation}
\label{full}
\Bel(x)=B(x)\df
\begin{cases}
\qquad 1,&\text{ if }\ x_3+x_2\ge0, 
\\
\displaystyle
1-\frac{(x_3+x_2)^2}{x_2^2-x_1^2},&\text{ if }\ x_3+x_2<0.
\end{cases}
\end{equation}
\end{theorem}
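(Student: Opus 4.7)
The plan is to establish $\Bel \le B$ and $\Bel \ge B$ separately: the former from the supersolution principle (Lemma~\ref{sup_sol}), and the latter from an explicit extremal construction. Continuity of $B$ across the gluing surface $x_3+x_2=0$ is automatic (both pieces equal $1$ there), so it remains to verify the obstacle condition and the main inequality for $B$.

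The obstacle condition reduces to Proposition~\ref{bc_no_weight}: when $x_3 = |x_1|$ and $x_2 \ge -|x_1|$ one has $x_3+x_2 \ge 0$ and $B = 1$, while in the opposite case the identity $x_2^2 - x_1^2 = (x_2 - |x_1|)(x_2 + |x_1|)$ yields $1 - (x_3+x_2)^2/(x_2^2 - x_1^2) = 2|x_1|/(|x_1|-x_2)$. For the main inequality I would pass to $y$-variables, where the nontrivial piece becomes $\bfM(y) = 1 - s^2/(4y_1 y_2)$ with $s = y_1+y_2+y_3$, on the region where $s < 0$ and (necessarily) $y_1, y_2 < 0$. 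Computing the partial second derivatives shows $\bfM_{y_1 y_1}, \bfM_{y_2 y_2}, \bfM_{y_3 y_3} \le 0$, which handles the $\pm$ case. For the $\eps$ case the admissible split directions are $(v_1,v_2,v_3)$ with $v_1 v_2 \le 0$; inspecting the sign of $\bfM_{y_1 y_2}$ one verifies that the Hessian of $\bfM$ gives a nonpositive quadratic form on this cone. Applying Lemma~\ref{sup_sol} then yields $\Bel \le B$.

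For the reverse inequality I would exploit the fact that $\bfM$ satisfies the homogeneous Monge--Amp\`ere equation and is hence foliated by characteristic lines along which it is affine. A short computation identifies these directions at an interior $y$: one is proportional to $\bigl(1, 0, (y_2+y_3)/y_1\bigr)$ and the other to $\bigl(0, 1, (y_1+y_3)/y_2\bigr)$, both of which have $v_1 v_2 = 0$ and so meet even the stricter admissibility constraint of the $\pm$ class. Splitting $x$ along such a direction produces a dyadic step for which the main inequality holds with equality; iterating the splitting drives the Bellman point onto the obstacle boundary $x_3 = |x_1|$, where the value is attained by a constant admissible pair by Proposition~\ref{bc_no_weight}. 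Gluing these finite constructions produces admissible pairs $\{\vf, \psi\}$ whose ratio $|\{t\colon \psi(t) \ge 0\}|/|I|$ approaches $B(x)$. The main obstacle is the lower bound: the characteristic flow must be arranged so the iteration reaches the obstacle in a controlled number of steps, and one must keep track of how the measure of the good set evolves through the iteration; the supersolution half is, by contrast, a direct calculation once the formula for $B$ is expressed in $y$-coordinates.
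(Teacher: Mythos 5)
Your overall architecture (supersolution via Lemma~\ref{sup_sol} for $\Bel\le B$, extremal structure for $\Bel\ge B$) matches the paper, and your reduction of the obstacle condition to Proposition~\ref{bc_no_weight} is correct. But one step in your upper bound is wrong as stated: checking $\bfM_{y_1y_1},\bfM_{y_2y_2},\bfM_{y_3y_3}\le0$ does \emph{not} ``handle the $\pm$ case.'' The $\pm$ main inequality requires concavity along all directions $(v_1,0,v_3)$ and $(0,v_2,v_3)$ with $v_3$ arbitrary, i.e.\ negative semidefiniteness of the full $2\times2$ Hessian blocks in $(y_1,y_3)$ and $(y_2,y_3)$, which diagonal nonpositivity does not give. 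Likewise, for the $\eps$ case ``inspecting the sign of $\bfM_{y_1y_2}$'' is not enough: after discarding the cross term $2\bfM_{y_1y_2}\xi_1\xi_2$ you still must show the remaining form is $\le0$. What actually saves both cases is the identity the paper computes,
\begin{equation*}
\big(\tfrac{d^2M}{dy^2}\xi,\xi\big)=-\frac1{2y_1y_2}\Big(\xi_3-\tfrac{y_1+y_3}{y_2}\xi_2-\tfrac{y_2+y_3}{y_1}\xi_1\Big)^2
+\frac{(y_1+y_2+y_3)^2}{2y_1^2y_2^2}\,\xi_1\xi_2,
\end{equation*}
which is manifestly $\le0$ on the cone $\xi_1\xi_2\le0$ (containing the $\pm$ directions). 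You would find this by direct computation, but your sketch as written asserts two insufficient criteria.

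For the lower bound you propose a genuinely different route from the paper: an iterative construction of near-extremal test pairs by splitting along characteristics, which you correctly identify as the hard, unfinished part. The paper avoids all of this with a two-endpoint concavity trick: each extremal $y_3=ky_1-y_2$ in the plane $y_2=\const$, with direction $(1,0,k)$ (admissible even for the $\pm$ class since $v_1v_2=0$), is \emph{extended} past $y_1=y_2$ until it meets $y_3=y_2-y_1$; the candidate $M$ is linear on this whole chord, both endpoints lie on the obstacle boundary $x_3=|x_1|$ where $\bfM=M$ is already known from Proposition~\ref{bc_no_weight}, and concavity of $\bfM$ along the admissible direction forces $\bfM\ge M$ between them. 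Since these chords foliate $\{y_1+y_2+y_3<0\}$, the lower bound follows with no test-function construction and no bookkeeping of the good set. If you pursue your constructive version you must in particular handle the fact that all characteristics in a plane $y_2=\const$ meet at the vertex $(0,y_2,-y_2)$, so the splitting cannot simply ``flow to the boundary'' symmetrically; the paper's observation that the extremals are chords with both feet on the known boundary is precisely the idea your sketch is missing.
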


\begin{proof}
As above we change variables
$$
x_1= y_1-y_2\,,\ x_2=y_1+y_2\,,\ x_3=y_3\,,\quad\text{i.e.}\quad
y_1=\frac{x_1+x_2}2\,,\ y_2=\frac{x_2-x_1}2\,.
$$
and will be looking for a function $M$
$$
M(y)\df B(x)\,,
$$
which is defined in $\Omega\df\{y=(y_1, y_2,y_3): y_3\ge |y_1-y_2|\}$, concave in variables 
$(y_1,y_3)$ and $(y_2,y_3)$, satisfies boundary condition~\eqref{bc_no_w},
or in term of $M$
$$
M(y_1,y_2,|y_1-y_2|)=
\begin{cases}
\qquad1,&\text{ if }\ y_1\ge0\ \text{or}\ y_2\ge0,
\\
\displaystyle
1-\frac{\max\{y_1,y_2\}}{\min\{y_1,y_2\}},&\text{ if }\ y_1<0\ \text{and}\ y_2<0.
\end{cases}
$$

Since the function $\Bel$ is even with respect to $x_1$, as before it is sufficient to consider
the half-space $\{x_1>0\}$, or the half-space $\{y_2<y_1\}$ in $y$-variable. But in fact we
can restrict ourselves to the cone $\{x_2<-x_1<0,\,x_3>x_1\}$ or $\{y_2<y_1<0,\,y_3>y_1-y_2\}$,
because for $y_1>0$ our function is identically $1$ by the same reason as before: it is
concave and bounded by $1$ on every ray $\{y_1=\const, y_2=\const, y_3>y_1-y_2\}$.

The boundary function is not smooth because the boundary itself is not smooth at the line 
$\{x_1=x_3=0\}$ and moreover, the boundary condition on this line has a jump. But inside 
the domain we can look for a smooth candidate $B$. Then it has to satisfy the boundary condition 
$\frac{\pd B}{\pd x_1}|_{x_1=0}=0$, or in terms of $M$
\eq[NbcMT]{
\frac{\pd M}{\pd y_1}\Big|_{y_1=y_2}\!\!=\ \frac{\pd M}{\pd y_2}\Big|_{y_1=y_2}\!.
}

Our function has to be concave in each plane $\{y_1=\const\}$ and in each plane $\{y_2=\const\}$
and we look for a candidate such that its concavity is degenerate in one of these planes, i.\,e. 
in that plane the function $M$ satisfies the \ma equation. Looking on the boundary we 
see that the extremals are segments of the lines $\{y_2=\const\}$ and therefore it is natural 
to look for a solution of the \ma equation 
$$
M_{y_1y_1}M_{y_3y_3}-M_{y_1y_3}^2=0
$$
in this plane. (Section of our domain $\Omega$ by this plane is shown on Figure~\ref{fig1}.)
\begin{figure}
\begin{center}
\begin{picture}(300,200)
\thinlines
\put(180,0){\vector(0,1){180}}
\put(10,20){\vector(1,0){280}}
\put(187,173){\footnotesize $y_3$}
\put(280,8){\footnotesize $y_1$}
\linethickness{.3pt}
\thicklines
\put(140,20){\line(1,1){120}}
\put(140,20){\line(-1,1){80}}
\put(140,20){\circle*{2}}
\put(138,8){\footnotesize $y_2$}
\put(124,-2){\footnotesize $(x_1=0)$}
\multiput(140,20)(0,3){55}{\circle*{1}}
\put(180,60){\circle*{2}}
\put(180,60){\line(-1,1){100}}
\put(187,58){\footnotesize $-y_2$}
\put(240,150){\footnotesize $y_3=|y_1-y_2|$}
\thinlines
\put(210,90){\line(-1,1){90}}
\put(117,170){$\scriptscriptstyle L$}
\put(180,60){\line(-2,1){40}}
\put(180,60){\line(-1,0){40}}
\put(180,60){\line(-2,-1){40}}
\multiput(140,40)(-2,-1){7}{\circle*{1}}
\multiput(140,60)(-2,0){20}{\circle*{1}}
\multiput(140,80)(-2,1){40}{\circle*{1}}
\put(180,33){$\scriptscriptstyle M=1-\frac{y_1}{y_2}$}
\put(178,35){\vector(-1,0){22}}
\put(150,120){$\scriptscriptstyle M=1$}
\put(160,115){\vector(0,-1){33}}
\put(60,43){$\scriptscriptstyle M_{y_1}=M_{y_2}$}
\put(100,45){\vector(1,0){40}}
\end{picture}
\caption{Intersection of the domain $\Omega$ with a plane $y_2=\const$}
\label{fig1}
\end{center}
\end{figure}
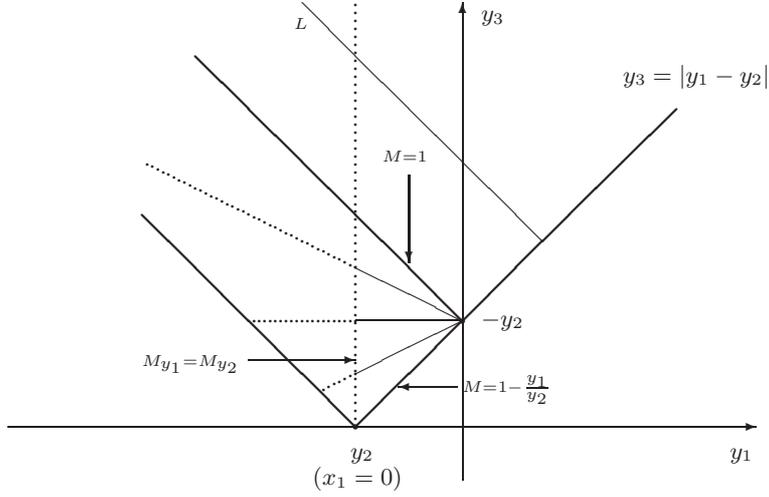
Note that the half-lines $\{y_3+y_1=\const\colon y_3>y_1-y_2\}$ are in the domain if $\const\ge y_2$.
Moreover, if $\const\ge -y_2$ (recall that $y_2<0$), then the boundary value on this ray (the ray $L$ on
Fig.~\ref{fig1}) is $1$, and hence it is identically $1$ for $y_3+y_2+y_1\ge0$, by the same reason 
as before: concave function bounded from below cannot decrease on an infinite interval. 

Therefore we need to solve the \ma equation
only in the triangle with the vertices $(0,y_2,-y_2)$, $(y_2,y_2,0)$, and $(y_2,y_2,-2y_2)$:
$$
\{y=(y_1,y_2,y_3)\colon y_2=\const, y_1>y_2, y_1-y_2<y_3<-y_1-y_2\}
$$
with the boundary conditions
$$
\begin{gathered}
M(y_1,y_2,y_1-y_2)=1-\frac{y_1}{y_2},\qquad M(y_1,y_2,-y_1-y_2)=1,
\\
M_{y_1}(y_2,y_2,y_3)=M_{y_2}(y_2,y_2,y_3)\,.
\end{gathered}
$$

Our function is linear on two sides of the triangle, so the minimal concave function linear on two
sides is the linear function it the whole triangle, however this function does not satisfies the
boundary condition on the side $y_1=y_2$. Therefore, the extremal lines cannot intersect inside the
triangle and the only way to foliate this triangle without singularities inside the domain is
a fan of straight line segments starting from the point $(0,y_2,-y_2)$, which we parametrize
by the slope $k$ of each extremal line:
\eq[extr-linesMT]{
y_3=ky_1-y_2\,.
}
The slope runs over the interval $[-1,1]$. For $k=-1$ we get the upper side of the
triangle $y_1+y_2+y_3=0$ where $M=1$, for $k=1$ we have the lower side $y_3=y_1-y_2$
where $M=1-\frac{y_1}{y_2}$. On all other extremal lines $M$ is linear in $y_1$ as well
$$
M=1+m(k,y_2)y_1
$$
and our task is to find its slope $m=m(k,y_2)$ with the prescribed values at the points $k=\pm1$:
$m(-1,y_2)=0$ and $m(1,y_2)=-\frac1{y_2}$.
We find this function from the boundary condition~\eqref{NbcMT} on the third side of the triangle.

First we deduce from~\eqref{extr-linesMT} that $k=k(y_1,y_2,y_3)=\frac{y_3+y_2}{y_1}$ and hence
$$
\frac{\pd k}{\pd y_1}=-\frac{k}{y_1}\qquad\text{and}\qquad\frac{\pd k}{\pd y_2}=\frac1{y_1}\,.
$$
Therefore,
$$
\begin{gathered}
\frac{\pd M}{\pd y_1}=m+y_1\frac{\pd m}{\pd k}\,\frac{\pd k}{\pd y_1}=m-k\frac{\pd m}{\pd k}\,,
\\
\frac{\pd M}{\pd y_2}=y_1\Big(\frac{\pd m}{\pd y_2}+\frac{\pd m}{\pd k}\,\frac{\pd k}{\pd y_2}\Big)=
y_1\frac{\pd m}{\pd y_2}+\frac{\pd m}{\pd k}\,.\rule{0pt}{25pt}
\end{gathered}
$$
Thus, the boundary condition~\eqref{NbcMT} turns into the following equation
$$
m-(k+1)\frac{\pd m}{\pd k}=y_2\frac{\pd m}{\pd y_2}\,,
$$
which has the general solution of the form
$$
m(k,y_2)=(k+1)\Phi\Big(\frac{k+1}{y_2}\Big)\,,
$$
where $\Phi$ is an arbitrary function. Since $m(1,y_2)=-\frac1{y_2}$, we have $\Phi(t)=-\frac t4$.
And finally
$$
M(y)=1-\frac{(k+1)^2}{4y_2}y_1=1-\frac{(y_1+y_2+y_3)^2}{4y_1y_2}\,,\qquad\text{if}\quad y_1+y_2+y_3<0,
$$
or
$$
B(x)=1-\frac{(x_2+x_3)^2}{x_2^2-x_1^2}\,,\qquad\text{if}\quad x_2+x_3<0.
$$
And our function is identically one on the rest of the domain.

Now it is an easy task to check that the found function $M$ satisfies concavity
conditions from Lemma~\ref{MTtuda}. Since our candidate is $C^1$-smooth function, the
desired concavity is sufficient to check only on the subdomain, where our candidate is less than
one, i.\,e., where $y_1+y_2+y_3<0$. For us there is important that $y_1<0$ and $y_2<0$ on this
part of the domain. We shall check the main inequality (condition~\ref{mi})
in the differential form, namely, we check that the quadratic form of the Hessian of $M$
is not positive in the required directions.
Direct calculations gives the following expression for the Hessian matrix:
$$
\frac{d^2M}{dy^2}=
\begin{pmatrix}
M_{y_1y_1}\! & \!M_{y_1y_2}\! & \!M_{y_1y_3}
\\
M_{y_2y_1}\! & \!M_{y_2y_2}\! & \!M_{y_2y_3}
\\
M_{y_3y_1}\! & \!M_{y_3y_2}\! & \!M_{y_3y_3}
\end{pmatrix}=
\begin{pmatrix}
\ds-\frac{(y_2\!+\!y_3)^2}{2y_1^3y_2} & \ds\frac{y_1^2\!+\!y_2^2\!+\!y_3^2}{4y_1^2y_2^2} 
& \ds\frac{y_2\!+\!y_3}{2y_1^2y_2}
\\
\ds\frac{y_1^2\!+\!y_2^2\!+\!y_3^2}{4y_1^2y_2^2} & \ds-\frac{(y_1\!+\!y_3)^2}{2y_1y_2^3} 
& \ds\frac{y_1\!+\!y_3}{2y_1y_2^2}\rule{0pt}{25pt}
\\
\ds\frac{y_2\!+\!y_3}{2y_1^2y_2} & \ds\frac{y_1\!+\!y_3}{2y_1y_2^2} 
& \ds-\frac1{2y_1y_2}\rule{0pt}{25pt}
\end{pmatrix}
$$
and its quadratic form can be written as follows:
$$
\big(\frac{d^2M}{dy^2}\xi,\xi\big)=
-\frac1{2y_1y_2}\Big(\xi_3-\frac{y_1\!+\!y_3}{y_2}\xi_2-\frac{y_2\!+\!y_3\!}{y_1}\xi_1\Big)^2
+\frac{(y_1\!+\!y_2\!+\!y_3)^2}{2y_1^2y_2^2}\xi_1\xi_2\,.
$$
In our part of the domain we have $y_1<0$ and $y_2<0$, therefore this quadratic form is negative 
if $\xi_1\xi_2\le0$. So, due to Lemma~\ref{sup_sol} we have inequality 
$\Bel_\pm(x)\le\Bel_\eps(x)\le B(x)$.

To prove the theorem we need to check the converse inequality $\Bel(x)\ge B(x)$. 
For this Bellman function it is very easy due to its following special property. 
Note that the function $M$ is linear on the extremal lines not
only in the triangle mentioned above, but also on the continuation of each extremal 
line as well (see Fig.~\ref{fig1}). Indeed, all extremal lines in the triangle under 
investigation are parametrized by their slope $k$, $-1<k<1$, and have the form
$$
y_3=ky_1-y_2,\qquad y_2\le y_1\le0,
$$
and the found function on this line is
$$
M(y_1,y_2,ky_1-y_2)=1-\frac{(k+1)^2}{4y_2}y_1\,.
$$
Thus, we see that this function is linear not only on the interval $y_1\in(y_2,0)$, but
for $y_1<y_2$ as well. So we can continue this extremal line up to its second point of
intersection with the boundary $y_3=|y_2-y_1|$, where this $M$ coincides with $\bfM$. In result
we have two points where the concave function $\bfM$ coincides with the linear function $M$,
therefore between these two points we have $\bfM(y)\ge M(y)$. Since the described continued
extremal line foliate the whole domain $y_1+y_2+y_3<0$, we have the desired inequality
for arbitrary point $y$ from $\Omega$.
\end{proof}

\begin{remark}
We would like to mention that the function~\eqref{full} was published by A.\,Os\c{e}kowski in~\cite{adam}.
It was found by him absolutely independently, but a bit later than the preliminary version of this paper
was accessible in the web (see~\cite{NRVV0}). However we would like to emphasize that in~\cite{adam} not only
this function is presented supplying us with the estimate of the measure where $\{\psi\ge\lambda\}$, 
but the more difficult function giving the estimate for the set $\{|\psi|\ge\lambda\}$ is found as well.
\end{remark}

\subsection{About coincidence of $\Bel_\pm$ with $\Bel_\eps$}

In this subsection we would like to underline that the fact of this coincidence is absolutely
not evident. In many cases as in the famous $L^p$ result of Burkholder the estimation for 
differentially subordinate martingales is the same as for $\pm$-transform. And the natural reason
for this is that any differentially subordinate martingale is a convex combination of $\pm$-transforms.
Indeed, if we fix a martingale $\psi$ being differentially subordinate to $\vf$, i.\,e.
$$
T_\eps\vf\df\psi=\sum_{J\in\cD(I)}\eps\cii J(\vf,h\cii J)h\cii J,\qquad|\eps\cii J|\le1\,,
$$
then every number $\eps\cii J$ can be represented as a convex combination of $\pm1$:
$$
\eps\cii J= \sum_{k=1}^{\infty} 2^{-k} \eps\cii{k,J}\,,\qquad \eps\cii{k,J}=\pm1\,.
$$
Therefore,
$$
T_\eps=\sum_{k=1}^{\infty} 2^{-k} T_{\eps\cii k}\,,
$$
If we were interested in the estimate of $T_\eps$ in a Banach space $X$ (say, $X=L^p$, $p>1$), 
then this  representation would show that
$$
\sup_{\eps\colon\eps\cii J\in[-1,1]}\!\!\|T_\eps\|_X\ =
\!\!\sup_{\eps\colon\eps\cii J\in\{-1,1\}}\!\!\|T_\eps\|_X\,.
$$

However, we are interested in the case $X=L^{1,\infty}$. Here one can use Lemma of Stein and Weiss:
\begin{lemma}
\label{SW}
Let $\{g_j\}$ be a sequence of non-negative measurable functions, such that 
$\|g_j\|_{L^{1, \infty}} \le 1$ for all $j$.
Let $\{c_j\}$ be a sequence of non-negative scalars such that $\sum c_j=1$ and 
$\sum c_j \log\frac1{c_j} =K <\infty$.
Then
$$
\big\|\sum_j c_j g_j \big\|_{L^{1, \infty}} \le 2(K+2)\,.
$$
\end{lemma}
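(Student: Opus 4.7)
The plan is a three-piece decomposition of each $g_j$ tailored to the level $\lambda$, exploiting the weak-$L^1$ bound $|\{g_j>t\}|\le 1/t$ in three complementary ways: as a uniform pointwise bound on a ``small'' part, as a support bound on a ``large'' part, and (via layer cake) as an $L^1$ bound on a ``middle'' part whose window spans a factor $1/c_j$.

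Fix $\lambda>0$. For each $j$ set
\[
g_j^{\mathrm{lo}}=g_j\chi_{\{g_j\le\lambda/2\}},\quad g_j^{\mathrm{mid}}=g_j\chi_{\{\lambda/2<g_j\le\lambda/(2c_j)\}},\quad g_j^{\mathrm{hi}}=g_j\chi_{\{g_j>\lambda/(2c_j)\}},
\]
and $P_\bullet=\sum_j c_j g_j^\bullet$, so $\sum_j c_j g_j=P_{\mathrm{lo}}+P_{\mathrm{mid}}+P_{\mathrm{hi}}$. Trivially $P_{\mathrm{lo}}\le\lambda/2$ pointwise, and since $|\{g_j>\lambda/(2c_j)\}|\le 2c_j/\lambda$, the support of $P_{\mathrm{hi}}$ has measure at most $\sum_j 2c_j/\lambda=2/\lambda$.

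The crux is the bound $\int g_j^{\mathrm{mid}}\le 1+\log(1/c_j)$. By layer cake,
\[
\int g_j^{\mathrm{mid}}=\int_0^{\lambda/2}\bigl|\{g_j\in(\lambda/2,\lambda/(2c_j)]\}\bigr|\,dt+\int_{\lambda/2}^{\lambda/(2c_j)}\bigl|\{g_j^{\mathrm{mid}}>t\}\bigr|\,dt,
\]
the first integrand being $\le|\{g_j>\lambda/2\}|\le 2/\lambda$ and the second being $\le 1/t$; this gives $\int g_j^{\mathrm{mid}}\le(\lambda/2)(2/\lambda)+\log\!\bigl((\lambda/(2c_j))/(\lambda/2)\bigr)=1+\log(1/c_j)$. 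Summing in $j$, $\int P_{\mathrm{mid}}\le\sum_j c_j\bigl(1+\log(1/c_j)\bigr)=1+K$.

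Off $\supp P_{\mathrm{hi}}$ one has $\sum_j c_j g_j=P_{\mathrm{lo}}+P_{\mathrm{mid}}\le\lambda/2+P_{\mathrm{mid}}$, so $\{\sum_j c_j g_j>\lambda\}\setminus\supp P_{\mathrm{hi}}\subseteq\{P_{\mathrm{mid}}>\lambda/2\}$. Markov's inequality now gives $|\{P_{\mathrm{mid}}>\lambda/2\}|\le(2/\lambda)(1+K)$, and combining with the bound $|\supp P_{\mathrm{hi}}|\le 2/\lambda$ yields $|\{\sum_j c_j g_j>\lambda\}|\le 2/\lambda+2(1+K)/\lambda=2(K+2)/\lambda$, as claimed. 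The main delicacy is the layer-cake split at $t=\lambda/2$ in the middle part: this is what pairs the entropy contribution $\log(1/c_j)$ with the additive $1$ that combines cleanly with the $2/\lambda$ support bound to produce exactly the constant $2(K+2)$.
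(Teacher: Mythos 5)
Your proof is correct, and the constant $2(K+2)$ comes out exactly. The paper itself gives no proof of this lemma (it simply cites the Stein--Weiss paper \cite{StW}); your three-way truncation of each $g_j$ at the heights $\lambda/2$ and $\lambda/(2c_j)$, with the layer-cake estimate $\int g_j^{\mathrm{mid}}\le 1+\log(1/c_j)$ on the middle piece, is essentially the classical Stein--Weiss argument, so there is nothing to fix.
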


See~\cite{StW} for the proof. From this lemma, we would conclude that
$$
\sup_{\eps\colon\eps_J\in[-1,1]}\!\!\|T_\eps\|_{L^{1,\infty}}\le 
\;2\big(2+\log2\sum_{k=1}^\infty k2^{-k}\big)
\!\!\sup_{\eps\colon\eps_J\in\{-1,1\}}\!\!\|T_\eps\|_{L^{1,\infty}}\,.
$$

However, Theorem \ref{B_no_weight} gives a better result:

\begin{cor}
\label{const1}
$$
\sup_{\eps\colon\eps_J\in[-1,1]}\!\!\|T_\eps\|_{L^{1,\infty}}\ = 
\!\!\sup_{\eps\colon\eps_J\in\{-1,1\}}\!\!\|T_\eps\|_{L^{1,\infty}} \,.
$$
\end{cor}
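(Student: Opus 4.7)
The plan is to derive the corollary from the equality $\Bel_\pm \equiv \Bel_\eps$ on $\Omega$, which is essentially already a by-product of the proof of Theorem~\ref{B_no_weight}, though not extracted there as a separate statement.

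\textbf{Step 1 (Bellman equality).} The inequality $\Bel_\eps(x) \le B(x)$ is proved in the closing paragraphs of Theorem~\ref{B_no_weight} by checking that the Hessian of the candidate $B$ yields a non-positive quadratic form on the full cone $\xi_1\xi_2 \le 0$, which is exactly the concavity condition for $\bfM_\eps$ (Lemma~\ref{MTtuda}); \emph{a fortiori} $\Bel_\pm \le B$. Conversely, the closing foliation argument---continuing each extremal line $y_3 = ky_1 - y_2$ (with $y_2$ held fixed) up to its second intersection with the boundary $y_3 = |y_1 - y_2|$ and invoking concavity of $\bfM$ along the segment together with the boundary values of Proposition~\ref{bc_no_weight}---uses concavity of $\bfM$ only along directions with $y_2$ fixed. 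Such directions are admissible for both $\bfM_\pm$ (since $\Delta y_2 = 0$ gives $|\Delta x_1| = |\Delta x_2|$) and $\bfM_\eps$ (since $\Delta y_1\cdot\Delta y_2 = 0 \le 0$). The argument therefore produces $\Bel_\pm \ge B$ and $\Bel_\eps \ge B$ in parallel, whence $\Bel_\pm = \Bel_\eps = B$ on all of $\Omega$.

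\textbf{Step 2 (From Bellman function to operator norm).} By the correspondence explained immediately after Lemma~\ref{sup_sol} and the homogeneity of $\Bel$, the sharp one-sided weak-type constant for a class $\mfA \in \{\mfA_\pm, \mfA_\eps\}$ equals
$$
c_\mfA \;=\; \sup_{|x_1|\le x_3,\ \lambda>0}\,\frac{\lambda\,\Bel_\mfA(x_1,-\lambda,x_3)}{x_3}\,,
$$
and by Step~1 we have $c_\pm = c_\eps$. The full two-sided operator norm is then extracted via the decomposition $\{|T_\eps\vf|>\lambda\} = \{T_\eps\vf>\lambda\}\sqcup\{T_{-\eps}\vf>\lambda\}$, the invariance of each class under $\eps\mapsto-\eps$, and the invariance of $\|\vf\|_1$ under $\vf\mapsto-\vf$. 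These symmetries exhibit $\sup_{\eps\in\mfA}\|T_\eps\|_{L^1\to L^{1,\infty}}$ as a universal function of $c_\mfA$, which is identical for $\mfA_\pm$ and $\mfA_\eps$; the claim follows.

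\textbf{Main obstacle.} The delicate point is the final clause of Step~2: rigorously relating the two-sided operator norm to the one-sided Bellman extremum in a way uniform across the two classes. One clean route is to construct joint extremizers saturating both tails $\{T_\eps\vf>\lambda\}$ and $\{T_\eps\vf<-\lambda\}$ simultaneously---at the symmetric sup-point $x_1=0$, reflecting a one-sided near-extremizer yields a pair whose tails are both near-extremal. A more systematic alternative is to repeat the Hessian/foliation analysis of Theorem~\ref{B_no_weight} for the symmetric Bellman function $\sup_{\mfA(x)}|\{\psi\ge 0\}\cup\{\psi\le 2x_2\}|/|I|$, which by the same reasoning coincides for $\mfA_\pm$ and $\mfA_\eps$; this bypasses the need for explicit extremizers.
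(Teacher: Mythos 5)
Your Step~1 and the one-sided half of Step~2 reproduce exactly the paper's (implicit) argument: the proof of Theorem~\ref{B_no_weight} yields $\Bel_\pm=\Bel_\eps=B$ (upper bound from the Hessian being non-positive on the cone $\xi_1\xi_2\le0$ together with Lemma~\ref{sup_sol}; lower bound from the foliation along extremal lines with $y_2$ fixed, a direction admissible for both classes), and the corollary is then read off from the identification of the weak-type constant with $\sup|x_2|\Bel(x)/x_3$. That is all the paper does; it is reading $\|T_\eps\|_{L^{1,\infty}}$ as the constant in the one-sided estimate~\eqref{noW-weakMT}, so no two-sided argument is required.

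A caution about your ``main obstacle'' discussion, should you insist on the two-sided reading: route (a) cannot work. The explicit formula gives the sharp one-sided constant $c_\mfA=2$ (attained in the limit $x_1=0$, $x_3/|x_2|\to0$), and the sharp two-sided weak-$(1,1)$ constant for martingale transforms is also $2$ (Burkholder); hence no single pair $(\eps,\vf)$ can have both tails $\{T_\eps\vf>\lambda\}$ and $\{T_\eps\vf<-\lambda\}$ simultaneously of measure close to $2\|\vf\|_1/\lambda$, since that would force the two-sided level set to have measure close to $4\|\vf\|_1/\lambda$. Route (b) is sound in principle, but it is not a corollary of anything proved here: it is precisely the harder Bellman function for $\{|\psi|\ge\lambda\}$ that the paper attributes to Os\c{e}kowski in the remark following Theorem~\ref{B_no_weight}, and would have to be computed from scratch.
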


\section{The Bellman function of weak weighted estimate of 
martingale transform and its properties.}
\label{PropBEL-MT}

Let $\vf$ be a dyadic martingale starting at $x_1$ and $\psi$ is its martingale 
transform, starting at $x_2$, i.\,e.,
$$
\vf=x_1+\!\sum_{J\in\cD(I)}\!\!(\vf, h\cii J) h\cii J\,, \qquad
\psi=x_2+\!\sum_{J\in\cD(I)}\!\!\eps\cii J(\vf,h\cii J)h\cii J\,.
$$
We consider here the following classes of martingale transforms: the martingale $\psi$
is differentially subordinate to $\vf$, i.\,e.  we consider the case when we assume that $|\eps\cii J|\le1$.
The first class of admissible pairs $\{\vf,\,\psi\}$ we denote by $\mfA_\pm$, the second one by
$\mfA_\eps$.

Passing to the weighted case we need to investigate a Bellman function of more variables.
Now two additional variables $x_4$ and $x_5$ appear describing a test weight $w$. 
We put
$$
x_4=\av wI\qquad\text{and}\qquad x_5=\inf_I w\,.
$$
The test weight $w$ will run over the set of all $A_1$ weight with $\Qq\le Q$ and with
the prescribed parameters $x_4$ and $x_5$. This, by the way, means that these parameters
must satisfy the following condition: $x_4\le Qx_5$. 

The coordinates $x_1$ and $x_2$ will be the same, but the coordinate $x_3$ we need 
to change slightly:
$$
x_3=\av{|\vf|w}I,
$$
because now we fix a wighted norm of the test function $\vf\in L^1(I, w\,dx)$.
A Bellman point $x=(x_1,x_2,x_3,x_4,x_5)=\bp{\vf,\psi,w}$ is defined by a dyadic martingale
$\vf$ started at $x_1$, by a subordinated to $\vf$ martingale $\psi$ started at $x_2$, and by
a $A_1$ weight $w$. The Bellman function at this point is defined as follows:
\begin{equation}
\label{BelMT}
\Bel(x)\df\Bel_Q(x)\df
\sup\,\frac1{|I|}w(\{t\in I: \psi(t)\ge0\})\,,
\end{equation}
where the supremum is taken over all admissible triples $\vf,\,\psi,\,w$. We mark the Bellman 
function by the index $Q$ to emphasize that it depends on a fixed parameter $Q$. And in fact 
we are interested just in the dependence of $\Bel_Q$ on this parameter. However during our 
calculations we will omit this index.

This Bellman function is defined in the following subdomain of $\bR^5$:
\begin{equation}
\label{O5}
\Omega:=\{x\in \bR^5\colon x_3\ge |x_1|x_5,\ 0<x_5\le x_4\le Qx_5\}\,.
\end{equation}
Note that formally the Bellman function is defined on the whole $\bR^5$, but in the domain $\Omega$
we include only the points, for which the set of test functions is not empty and therefore 
$\Bel(x)\neq-\infty$ (we would like to assume that $\Bel\ge0$).

\subsection{The properties of $\Bel_{Q}$.}
\label{propMT}

\subsubsection{The first property: boundary conditions.} 

On the boundary $x_4=x_5$ the weight is a constant function $w=x_4=x_5$, and therefore
$$
\Bel_Q(x):= \Bel(x_1,x_2,x_3,x_5,x_5)=
\begin{cases}
\qquad x_5,&\text{ if }\ x_3+x_2x_5\ge0, 
\\
\displaystyle
x_5\Big(1-\frac{(\frac{x_3}{x_5}+x_2)^2}{x_2^2-x_1^2}\Big),&\text{ if }\ x_3+x_2x_5<0.
\end{cases}
$$
As we already mentioned , we will usually skip subscript $Q$ and write simply $\Bel$ instead of $\Bel_Q$.
%

\subsubsection{The second property: the homogeneity.} 
\label{subHomog}
It is clear that if $\{\vf,\psi,w\}$ is the set of admissible triples for a point $x\in\Omega$,
then the set of triples $\{s_1\vf,s_1\psi,s_2w\}$ is admissible for the point 
$$
\tilde x=(s_1x_1,s_1x_2,s_1s_2x_3, s_2x_4,s_2x_5)
$$
for an arbitrary pair of positive numbers
$s_1,\,s_2$. Then by the definition of the Bellman function we have
\eq[homogenMF]{
\Bel(\tilde x)=s_2\Bel(x)\,.
}

In what follows we deal mainly with the restriction $\cB$ of $\Bel$ to the intersection of $\Omega$ with the three-dimensional affine
plane 
\eq[G30]{
\{(x_1,-1, x_3,x_4, 1)\colon|x_1|\le x_3,\;1\le x_4\le Q\}\,,
}
i.\,e. the function
\eq[cB]{
\cB(x_1,x_3,x_4)=\Bel(x_1,-1,x_3,x_4,1)\,.
}
We will identify the above-mentioned part of  the three-dimensional affine
plane 
 with a subdomain of $\bR^3$: 
 \eq[G3]{
G:= \{(x_1, x_3,x_4)\colon|x_1|\le x_3,\;1\le x_4\le Q\}\,.
}

\bigskip

For $x_2\ge0$ we always have $\Bel(x)=x_4$ because for any such point the constant test function
$\psi=x_2$ is admissible, and for $x_2<0$ we can reconstruct $\Bel$ from $\cB$ due to 
homogeneity~\eqref{homogenMF}:
choosing $s_1=-x_2^{-1}$ and $s_2=x_5^{-1}$ we get
\begin{equation}
\label{Bn}
\Bel(x) = x_5 \Bel(-\frac{x_1}{x_2},-1,-\frac{x_3}{x_2x_5},\frac{x_4}{x_5},1)
= x_5 \cB(-\frac{x_1}{x_2},-\frac{x_3}{x_2x_5},\,\frac{x_4}{x_5}\,)\,.
\end{equation}

\subsubsection{The third property:  special form of concavity.}
\label{subMain}
Here we state our main inequality, the weighted analog of Lemma~\ref{MTtuda}.

\begin{lemma}
\label{tudaweight} 
Let $x^\pm$ be two points in $\Omega$ such that $|x^+_2-x^-_2|\le|x^+_1-x^-_1|$ 
and let the point $x$ with $x_i=\frac12(x^+_i+x^-_i)$ for $1\le i\le4$
and $x_5=\min\{x^+_5,x^-_5\}$ be in $\Omega$ as well. Then
\begin{equation}
\label{mi1} \Bel(x)-\frac{\Bel(x^+)+\Bel(x^-)}2\ge 0\,.
\end{equation}
\end{lemma}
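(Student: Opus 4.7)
The plan is to mimic the concatenation argument from Lemma~\ref{MTtuda}, with the new ingredient being the correct bookkeeping for the weight. Fix $\eta>0$ and pick admissible triples $\{\vf^\pm,\psi^\pm,w^\pm\}$ supported on the two halves $I^\pm$ of a fixed interval $I$, with $\bp{\vf^\pm,\psi^\pm,w^\pm}=x^\pm$ and
$$
\frac1{|I^\pm|}\,w^\pm\bigl(\{t\in I^\pm\colon \psi^\pm(t)\ge0\}\bigr)\ge \Bel(x^\pm)-\eta\,.
$$
Define $\vf,\psi,w$ on $I$ by gluing: $\vf|_{I^\pm}=\vf^\pm$, $\psi|_{I^\pm}=\psi^\pm$, $w|_{I^\pm}=w^\pm$. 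As in the unweighted argument, the top-level martingale differences are
$$
\Delta_I\vf=\frac{x_1^+-x_1^-}{2}(\chi_{I^+}-\chi_{I^-})\,,\qquad
\Delta_I\psi=\frac{x_2^+-x_2^-}{2}(\chi_{I^+}-\chi_{I^-})\,,
$$
so the hypothesis $|x_2^+-x_2^-|\le|x_1^+-x_1^-|$ produces an $|\eps_I|\le1$ implementing the differential subordination at scale $I$; below scale $I$ subordination is inherited from $\{\vf^\pm,\psi^\pm\}$.

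Next I verify that $\{\vf,\psi,w\}$ is admissible at the point $x$. Averages of $\vf$, $\psi$, and $w$ over $I$ are half-sums of their values on $I^\pm$, giving $x_1$, $x_2$, $x_4$ respectively. For the third coordinate, $\av{|\vf|w}I=\tfrac12(x_3^++x_3^-)=x_3$. For the fifth, $\inf_I w=\min\{\inf_{I^+}w^+,\inf_{I^-}w^-\}=\min\{x_5^+,x_5^-\}=x_5$, exactly the choice in the statement. The $A_1$ condition holds trivially on every dyadic $J\subsetneq I$ (inherited from $w^\pm$), and at the top scale we get $\av wI/\inf_I w=x_4/x_5\le Q$ because $x\in\Omega$ by hypothesis.

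With admissibility in hand, the definition of $\Bel$ gives
$$
\Bel(x)\ge\frac1{|I|}\,w\bigl(\{t\in I\colon\psi(t)\ge0\}\bigr)
=\frac12\sum_{\sigma=\pm}\frac1{|I^\sigma|}\,w^\sigma\bigl(\{t\in I^\sigma\colon\psi^\sigma(t)\ge0\}\bigr)
\ge\frac{\Bel(x^+)+\Bel(x^-)}{2}-\eta\,,
$$
and letting $\eta\to0$ concludes the proof.

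The one place that requires more care than in Lemma~\ref{MTtuda} is the treatment of the new coordinate $x_5$: it must be defined as $\min\{x_5^+,x_5^-\}$ rather than the half-sum, because the infimum of $w$ on $I$ is the minimum of the infima over $I^\pm$; and the assumption $x\in\Omega$ — in particular $x_4\le Qx_5$ — is exactly what is needed so that the glued weight still has $A_1$-norm at most $Q$ at the top scale. Everything else is a straightforward combination of the unweighted splitting argument with the weighted versions of the balance identities for $x_1,x_2,x_3,x_4$.
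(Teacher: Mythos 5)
Your proof is correct and follows essentially the same route as the paper's: glue the two near-extremal triples on the halves of $I$, use $|x_2^+-x_2^-|\le|x_1^+-x_1^-|$ to get subordination at the top scale, observe that $\inf_I w=\min\{x_5^+,x_5^-\}$ and that $x\in\Omega$ gives the $A_1$ bound at scale $I$, and let $\eta\to0$. The paper's own proof is exactly this "repeat Lemma~\ref{MTtuda} almost verbatim" argument, so no further comment is needed.
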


\begin{proof}
We repeat almost verbatim the proof of Lemma~\ref{MTtuda}.
Fix $x^\pm\in\Omega$, and take two triples of test functions $\vf^\pm$, $\psi^\pm$, $w^\pm$ 
giving the supremum in $\Bel(x^+)$, $\Bel(x^-)$ respectively up to a small number $\eta>0$. 
Using the fact that the function $\Bel$ does not depend on the interval where the test 
functions are defined, we assume that $\vf^+$, $\psi^+$, $w^+$ live on $I^+$ and $\vf^-$, 
$\psi^-$, $w^-$ live on $I^-$,
i.e.,
$$
\vf^\pm=x^\pm_1+\!\!\!\!\sum_{J\in\cD(I)}\!\!\!\!a\cii J h\cii J\,,\qquad 
\psi^\pm=x^\pm_2+\!\!\!\!\sum_{J\in\cD(I)}\!\!\!\!\eps\cii J a\cii Jh\cii J\,,\quad|\eps\cii J|\le1\,.
$$

Consider
$$
\vf(t):=
\begin{cases}
\vf^+(t)\,,&\text{if }\;t\in I^+
\\
\vf^-(t)\,,&\text{if }\;t\in I^-
\end{cases}
=\frac{x^+_1+x^-_1}2+\frac{x^+_1-x^-_1}2h\cii{I}+ \!\!\!\!\sum_{J\in\cD(I)}\!\!\!\!a\cii Jh\cii J\,,
$$
$$
\psi(t):=
\begin{cases}
\psi^+(t)\,,&\text{if }\;t\in I^+
\\
\psi^-(t)\,,&\text{if }\;t\in I^-
\end{cases}
=\frac{x^+_2+x^-_2}2+\frac{x^+_2-x^-_2}2h\cii{I}+ \!\!\!\!\sum_{J\in\cD(I)}\!\!\!\!\eps\cii Ja\cii Jh\cii J\,.
$$
and 

$$
w(t):=
\begin{cases}
w^+(t)\,,&\text{if }\;t\in I^+,
\\
w^-(t)\,,&\text{if }\;t\in I^-.
\end{cases}
$$

Since $|x^+_2-x^-_2|\le|x^+_1-x^-_1|$ and all $|\eps\cii J|\le1$, $\psi$ is subordinated
to $\vf$. Moreover, according to hypothesis of the Lemma, the point $x$ is in $\Omega$,
whence $x_4\le Qx_5$, i.\,e. $\Qq\le Q$. Therefore the triple $\vf$, $\psi$, $w$ is an 
admissible triple of the test functions corresponding to the point $x$, and
\begin{align*}
\Bel(x)&\ge\frac1{|I|}w\big(\{t\in I_0\colon \psi(t)\ge0\}\big)
\\
&=\frac1{2|I^+|}w^+\big(\{t\in I^+\colon \psi(t)\ge0\}\big)+
\frac1{2|I^-|}w^-\big(\{t\in I^-\colon \psi(t)\ge0\}\big)
\\
&\ge\frac12\Bel(x^+)+\frac12\Bel(x^-)-2\eta.
\end{align*}
Since this inequality holds for an arbitrary small $\eta$, we can pass to the
limit as $\eta\to0$, what gives us the required assertion.
\end{proof}

\subsubsection{The forth property:  $\Bel$ decreases in $x_5$} 
\label{prop4}
This is a corollary of the preceding property, i.\,e. it follows from the main inequality. 
Indeed if we put in the hypotheses of Lemma~\ref{tudaweight} $x_i^+=x_i^-$ for $1\le i\le4$ 
and $x_5^+>x_5^-$, then $x_5=x_5^-$ and inequality~\eqref{mi1} turns into
\begin{equation}
\label{decr_m}
\Bel(x_1,x_2,x_3,x_4,x_5^-)-\Bel(x_1,x_2,x_3,x_4,x_5^+)\ge0\,,
\end{equation}
which means that $B$ is monotone in $x_5$. 

\subsubsection{The fifth property: function $t\mapsto\frac1t\cB(x_1,tx_3, tx_4)$ is increasing}
\label{prop5}
Function $\cB$ was defined in \eqref{cB}. 
This property of $\cB$ is in fact the preceding property rewritten in terms of $\cB$. Indeed, if we put $x_2=-1$
and use~\eqref{Bn} and ~\eqref{decr_m}  we get the required monotonicity (we just rewrite  ~\eqref{Bn} and ~\eqref{decr_m}  and use the notations $t^+=\frac1{x_5^-}$ and $t^-=\frac1{x_5^+}$).

\subsubsection{The sixth property: function $\cB$ is concave}
\label{prop6}
Lemma~\ref{tudaweight} applied to the case $x_2^+=x_2^-=-1$ and $x_5^+=x_5^-=1$ guarantees 
the stated concavity.

\subsubsection{The seventh property: the symmetry  and monotonicity in $x_1$} 
It is easy to see from the definition that $\Bel$, and hence $\cB$ as well, is even 
in its variable $x_1$. 

Concavity of $\cB$ (in $x_1$) and this symmetry together imply 
that $x_1\mapsto \cB(x_1,x_3,x_4)$ is increasing on $[-x_3,0]$ and decreasing on $[0,x_3]$.

\subsection{The goal and the idea of the proof}
\label{goal}
It would be natural now to solve the corresponding boundary value problem for the \ma equation,
to find the function $\Bel$, as it was done in the unweighted case, and then to find the constant
we are interested in:
$$
C(Q)=\sup\big\{\frac{|x_2|\Bel(x)}{x_3}\colon x_2<0,\; x_3\ge|x_1|x_5,\; x_5\le x_4\le Qx_5\big\}\,.
$$
However for now this task is too difficult for us. So, we use the listed properties of $\Bel$
to prove the following estimate from below on function $\cB$.

\begin{theorem}
\label{logQthm}
If $Q\ge4$ then 
\begin{equation}
\label{logQ}
\cB(x_1,x_3,x_4) \ge\frac1{515} Q(\log Q)^{1/3}x_3\,.
\end{equation}
at some point $(x_1,x_3,x_4)\in G$.
\end{theorem}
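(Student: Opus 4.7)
The strategy is to establish the lower bound at a single carefully chosen point in $G$, rather than uniformly, by iteratively applying the axiomatic properties of $\cB$ derived in Section~\ref{propMT}. I would start from the boundary stratum $x_4 = 1$, where $\cB$ coincides with the unweighted Bellman function of Theorem~\ref{B_no_weight} restricted to the slice $x_2 = -1,\, x_5 = 1$: explicitly, $\cB(x_1, x_3, 1) = 1 - (x_3-1)^2/(1-x_1^2)$ for $x_3 < 1$, so that $\cB(0, x_3, 1)/x_3 = 2 - x_3$ tends to $2$ as $x_3 \to 0$. This gives a base case where $\cB/x_3$ is of order one.

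The main step is to propagate this estimate into the interior $x_4 > 1$ using the main inequality of Lemma~\ref{tudaweight}: for any admissible splitting $x = \tfrac12(x^+ + x^-)$ with $|x_2^+ - x_2^-| \le |x_1^+ - x_1^-|$ and $x_5 = \min\{x_5^+, x_5^-\}$, one has $\cB(x) \ge \tfrac12(\cB(x^+) + \cB(x^-))$. I would design a finite iteration of Bellman points $x^{(0)} \to x^{(1)} \to \cdots \to x^{(n)}$, where $x^{(0)}$ lies on the boundary and each $x^{(k+1)}$ is the midpoint of an admissible pair whose ``upper'' child lies on a stratum where $\cB$ has already been controlled from below (or is explicit), while the ``lower'' child re-uses the estimate at $x^{(k)}$. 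The rescaling monotonicity from \S\ref{prop5} (the increase of $t \mapsto t^{-1}\cB(x_1, tx_3, tx_4)$ for $t \ge 1$) lets me re-normalize $(x_3, x_4)$ between steps so that each iteration contributes a controlled additive gain to the quantity $\cB/x_3$, while $x_4$ grows in a prescribed way.

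The iteration terminates when $x_4^{(n)}$ first approaches $Q$, which caps the total number of steps $n$. The main obstacle is to calibrate the per-step parameters so that the accumulated gain at the final point is of order $Q (\log Q)^{1/3}$, rather than merely $\log Q$ (which is what a naive application of concavity would produce and which would only recover the upper bound of Theorem~\ref{LOP-t}). The exponent $1/3$ should arise from optimizing a recursion whose step-size parameter enters the gain linearly but enters the loss from the constraint $|x_2^+ - x_2^-| \le |x_1^+ - x_1^-|$ quadratically, against a logarithmic budget on $x_4 \le Q$; optimizing a sum of the form $\sum_k (\text{linear gain})_k$ subject to a constraint on $\sum_k (\text{quadratic cost})_k \lesssim \log Q$ naturally yields a cube-root power by Cauchy--Schwarz / Lagrange-multiplier tuning. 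The concavity of $\cB$ from \S\ref{prop6} and its symmetry in $x_1$ will be used to justify the admissibility of the chosen midpoints. The explicit constant $1/515$ is then extracted from careful bookkeeping of the constants at each step.
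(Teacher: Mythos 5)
Your plan is a genuinely different strategy from the paper's: you propose to exhibit the blow-up \emph{constructively}, by iterating the midpoint concavity of Lemma~\ref{tudaweight} along an explicit chain of Bellman points starting from the explicit boundary data at $x_4=x_5$. The paper instead argues \emph{indirectly}: it sets $R=\sup \cB(x)/x_3$, uses Aleksandrov's theorem to turn the discrete concavity into an a.e.\ matrix inequality, averages $\cB$ in the $(x_3,x_4)$ scaling direction to get a function $\beta$ whose quantity $\gamma_\beta$ admits the pointwise bound $-\gamma_\beta\le 8R(|x_1|+x_3/x_4)$, feeds in the single explicit test-triple of Lemma~\ref{PodpirPrimer} to bound $\beta_{x_3}$ from below, and then integrates the determinant condition \eqref{det} twice in $x_1$. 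The logarithm appears there as $\int_0^{x_1}\frac{dt}{t+x_3/x_4}=\log(1+x_4x_1/x_3)$, and the exponent $1/3$ appears because $R$ enters the resulting self-referential inequality to the third power (once through the bound on $-\gamma_\beta$ and twice through $m^2$ with $a\sim Q/R$), yielding $(Q/cR)^3\log Q\le 1$.

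As written, your proposal has a genuine gap: it is a program rather than a proof, and the one mechanism you offer for the hard part --- the extra $(\log Q)^{1/3}$ beyond the linear factor $Q$ --- does not work. Maximizing $\sum_k(\text{gain})_k$ subject to $\sum_k(\text{gain})_k^2\lesssim\log Q$ gives, by Cauchy--Schwarz, an \emph{upper} bound $\sqrt{n\log Q}$ on the total gain; this neither produces a cube root nor supplies the dominant factor $Q$, which in the paper comes from the single weight $w=1+2(x_4-1)\chi_{(1/4,3/4)}$ of Lemma~\ref{PodpirPrimer} and not from accumulation over many steps. Moreover, each application of Lemma~\ref{tudaweight} requires lower bounds at \emph{both} children of the prescribed midpoint, with the children, the midpoint, the constraint $|x_2^+-x_2^-|\le|x_1^+-x_1^-|$, and the rule $x_5=\min\{x_5^+,x_5^-\}$ all compatible with $x_4\le Qx_5$ at every node; you never specify the points $x^{(k)}$, the splitting directions, or the step sizes, and it is exactly this bookkeeping that makes a direct construction hard (the authors state that finding $\Bel$ explicitly is beyond them, which is why they resort to the indirect argument). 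Until you produce the explicit chain and verify that the recursion closes with the claimed gain per step, the theorem is not proved by your route.
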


Now a couple of words about the idea of the proof of Theorem \ref{logQthm}. Ideally we would 
like to find the formula for $\cB$ (and therefore  for $\Bel$ because of \eqref{Bn}).
To proceed we rewrite the third property of $\Bel$ (see subsection~\ref{subMain}) as a PDE 
on $\cB$. Then, using the boundary conditions on $\cB$ on $\pd G$ (the domain $G$ is defined 
in~\eqref{G3}), we may hope to solve this PDE.
Unfortunately there are many roadblocks on this path, starting with the fact that 
the third property of $\Bel$ is not a PDE, it is rather a partial differential inequality 
in discrete form. In the not weighted case we pay no attention to this important fact. We simply assume
the required smoothness of our function to find a smooth candidate. After such a candidate was found
we have proved that it coincides with the required Bellman function. Now we cannot find a candidate
and we will work with the abstractly defined Bellman function whose smoothness is unknown. 
We will write the inequality in discrete form as a pointwise partial differential inequality, 
but for that we will need a subtle result of Aleksandrov.

\subsection{From discrete inequality to differential inequality via Aleksandrov's theorem}
\label{Al}
As it was mentioned in Subsection~\ref{prop6} the function $\cB$ is concave on its domain 
of definition $G$. By the result of Aleksandrov, see Theorem~6.9 of~\cite{EG}, $\cB$ has 
all second derivatives almost everywhere in $G$. Second property (homogeneity) of function $\Bel$ 
(see~\eqref{Bn}) implies that the function $\Bel$ has all second derivatives almost everywhere in $\Omega$.

First, using this fact we rewrite the homogeneity condition (see Subsection~\ref{subHomog}) in the 
following differential form:
\begin{align}
\label{homogen1}
x_1\Bel_{x_1}+x_2\Bel_{x_2}+x_3\Bel_{x_3}&=0\,;
\\
\label{homogen2}
x_3\Bel_{x_3}+x_4\Bel_{x_4}+x_5\Bel_{x_5}&=\Bel\,.
\end{align}
These equalities we have got by differentiating~\eqref{homogenMF} with respect to $s_1$ and 
with respect to $s_2$ and taking the result for $s_1=s_2=1$.

Our second step is to replace the main inequality in discrete form by the inequality in
the form of a pointwise partial differential inequality. Lemma~\ref{tudaweight} implies
that the quadratic form
\eq[mi4]{
\sum_{i,j=1}^4\Bel_{x_ix_j}\xi_i\xi_j
}
is non-positive at almost any interior point of $\Omega$ and for all vectors $\xi\in\bR^4$
such that $|\xi_2|\le|\xi_1|$.

We consider three partial cases of~\eqref{mi4} with $\xi_1=\xi_2$, with $\xi_1=-\xi_2$, and
with $\xi_2=0$. Moreover, to reduce our investigation to consideration of $2\times2$ matrices we
choose some special relation between $\xi_3$ and $\xi_4$. In the first case we consider the quadratic
form on the vector $\xi$ with 
$$
\xi_1=\xi_2=-\delta_1,\quad\xi_3=x_3(\delta_1+\delta_2),\quad\xi_4=x_4\delta_2. 
$$
In the second case we put 
$$
\xi_1=-\xi_2=\delta_1,\quad\xi_3=x_3(\delta_1+\delta_2),\quad\xi_4=x_4\delta_2. 
$$
Then we get two quadratic forms
$$
\sum_{i,j=1}^4\Bel_{x_ix_j}\xi_i\xi_j=\sum_{i,j=1}^2K^\pm_{ij}\delta_i\delta_j\,,
$$
where we denote by $K^\pm$ two $2\times 2$ unpleasant (on the first glance) matrices:
$$
K^\pm=
\begin{pmatrix}
\!\begin{array}{c}
\scriptstyle\Bel_{x_1x_1}\pm\Bel_{x_1x_2}\mp x_3\Bel_{x_1x_3}\pm\Bel_{x_1x_2}
\\
\scriptstyle+\Bel_{x_2x_2}-2x_3\Bel_{x_2x_3}\mp x_3\Bel_{x_1x_3}+x_3^2\Bel_{x_3x_3}
\end{array}\!&\!
\!\begin{array}{c}
\scriptstyle\mp x_3\Bel_{x_1x_3}-x_3\Bel_{x_2x_3}+x_3^2\Bel_{x_3x_3}
\\
\scriptstyle\mp x_4\Bel_{x_1x_4}-x_4\Bel_{x_2x_4}+x_3x_4\Bel_{x_3x_4}
\end{array}\!
\\
\!\begin{array}{c}
\scriptstyle\mp x_3\Bel_{x_1x_3}-x_3\Bel_{x_2x_3}+x_3^2\Bel_{x_3x_3}
\\
\scriptstyle\mp x_4\Bel_{x_1x_4}-x_4\Bel_{x_2x_4}+x_3x_4\Bel_{x_3x_4}
\end{array}\!&\!
\scriptstyle x_3^2\Bel_{x_3x_3}+2x_3x_4\Bel_{x_3x_4}+x_4^2\Bel_{x_4x_4}\rule{0pt}{25pt}
\end{pmatrix}.
$$
These matrices are non-positively defined and their half sum is the following non-positively defined matrix
\eq[main33]{
\begin{pmatrix}
\scriptstyle\Bel_{x_1x_1}\!+\Bel_{x_2x_2}-2x_3\Bel_{x_2x_3}\!+x_3^2\Bel_{x_3x_3}&
\scriptstyle-x_3\Bel_{x_2x_3}\!+x_3^2\Bel_{x_3x_3}\!-x_4\Bel_{x_2x_4}\!+x_3x_4\Bel_{x_3x_4}\!
\\
\!\scriptstyle-x_3\Bel_{x_2x_3}\!+x_3^2\Bel_{x_3x_3}\!-x_4\Bel_{x_2x_4}\!+x_3x_4\Bel_{x_3x_4}&
\scriptstyle x_3^2\Bel_{x_3x_3}\!+2x_3x_4\Bel_{x_3x_4}\!+x_4^2\Bel_{x_4x_4}\rule{0pt}{17pt}
\end{pmatrix}.
}

Before proceed further we rewrite this matrix in terms of the function $\cB$. For this aim we
have to get rid of the derivatives with respect to $x_2$ in this matrix. We are able to do this
by using~\eqref{homogen1}:
\begin{align}
\notag
-x_2\Bel_{x_2x_3}&=\Bel_{x_3}+x_1\Bel_{x_1x_3}+x_3\Bel_{x_3x_3}\,;
\\
\notag
-x_2\Bel_{x_2x_4}&=x_1\Bel_{x_1x_4}+x_3\Bel_{x_3x_4}\,;
\\
\notag
x_2^2\Bel_{x_2x_2}&=2x_1\Bel_{x_1}+2x_3\Bel_{x_3}+x_1^2\Bel_{x_1x_1}+2x_1x_3\Bel_{x_1x_3}+x_3^2\Bel_{x_3x_3}\,.
\end{align}

Using these expressions at the point $x=(x_1,-1,x_3,x_4,1)$ we can rewrite the matrix~\eqref{main33} as follows
\eq[main34]{
\begin{pmatrix}
(1+x_1^2)\cB_{x_1x_1}+2x_1\cB_{x_1}&
-x_1\cB_{x_3}-x_1x_3\cB_{x_1x_3}-x_1x_4\cB_{x_1x_4}
\\
-x_1\cB_{x_3}-x_1x_3\cB_{x_1x_3}-x_1x_4\cB_{x_1x_4}&
x_3^2\cB_{x_3x_3}+2x_3x_4\cB_{x_3x_4}+x_4^2\cB_{x_4x_4}
\end{pmatrix}.
}

Now we consider the matrix $K^0$, that appears if we take $\xi_1=x_1\delta_1$, $\xi_2=0$, 
$\xi_3=x_3\delta_2$, and $\xi_4=x_4\delta_2$ in our quadratic form 
$$
\sum_{i,j=1}^4\Bel_{x_ix_j}\xi_i\xi_j=\sum_{i,j=1}^2K^0_{ij}\delta_i\delta_j\,.
$$
In result we get
$$
K^0=\begin{pmatrix}
x_1^2\Bel_{x_1x_1}&
x_1x_3\Bel_{x_1x_3}+x_1x_4\Bel_{x_1x_4}
\\
x_1x_3\Bel_{x_1x_3}+x_1x_4\Bel_{x_1x_4}&
x_3^2\Bel_{x_3x_3}+2x_3x_4\Bel_{x_3x_4}+x_4^2\Bel_{x_4x_4}
\end{pmatrix}.
$$
The same matrix at the point $x=(x_1,-1,x_3,x_4,1)$ is
\eq[main37]{
\begin{pmatrix}
x_1^2\cB_{x_1x_1}&
x_1x_3\cB_{x_1x_3}+x_1x_4\cB_{x_1x_4}
\\
x_1x_3\cB_{x_1x_3}+x_1x_4\cB_{x_1x_4}&
x_3^2\cB_{x_3x_3}+2x_3x_4\cB_{x_3x_4}+x_4^2\cB_{x_4x_4}
\end{pmatrix}.
}

Taking the sum of~\eqref{main34} and~\eqref{main37} we get the following non-positive matrix
\eq[main38]{
\begin{pmatrix}
(1+2x_1^2)\cB_{x_1x_1}+2x_1\cB_{x_1}&
-x_3\cB_{x_3}
\\
-x_3\cB_{x_3}&
2(x_3^2\cB_{x_3x_3}+2x_3x_4\cB_{x_3x_4}+x_4^2\cB_{x_4x_4})
\end{pmatrix}\le 0\,.
}

\begin{defin} Consider a subdomain of $G$,
\label{G1}
$$
G_1:=\{ (x_1,x_3,x_4)\in G\colon x_3>2|x_1|,\: 2<x_4<Q\}\,.
$$
\end{defin}

\medskip

Fix now $x=(x_1,x_3,x_4)\in G_1$ (now $x$ is a $3$-vector, not a $5$-vector as above, see \eqref{G3}) and a parameter $t\in [1/2,1]$. 
Consider inequality~\eqref{main38} at the point $x^t=(x_1,tx_3, tx_4)$.

Let us introduce a new function $\beta$, which is certain averaging of $\cB$, namely, for any $x\in G_1$ we put
$$
\beta(x)\df 2\int_{1/2}^1 \cB(x^t)\,dt\,.
$$
Notice several simple facts. First of all, as $\cB$ is concave, the differentiation under the integral sign is easily justified, and we get
$$
x_i\beta_{x_i}(x)=2\int_{1/2}^1x^t_i\cB_{x_i}(x^t)\,dt,\qquad 
x_i^2\beta_{x_ix_i}=2\int_{1/2}^1(x^t_i)^2\cB_{x_ix_i}(x^t)\,dt\,.
$$
For every function $F$ on domain $G$ we introduce the notation,
$$
\gamma\cii F(x)\df x_3^2F_{x_3x_3}+2x_3x_4F_{x_3x_4}+x_4^2F_{x_4}\,,
$$
then
\eq[psi-H]{
\gamma\cii \beta(x)= 2\int_{1/2}^1 \gamma\cii \cB (x^t) dt\,.
}
Now integrate~\eqref{main38} on the interval $t\in [1/2,1]$. 
The previous simple observations allow us now to rewrite our reduced concavity condition in the form
\eq[main39]{
\begin{pmatrix}
(1+2x_1^2)\beta_{x_1x_1}+2x_1\beta_{x_1}&
-x_3\beta_{x_3}
\\
-x_3\beta_{x_3}&
2\gamma\cii \beta
\end{pmatrix}\le 0\,.
}

The reader may wonder, why we are so keen to replace~\eqref{main38} by a virtually the same~\eqref{main39}? 
The answer is because we can give a very good  pointwise estimate on $\gamma\cii \beta(x)$, $x\in G_1$.
Unfortunately we cannot give any pointwise  estimate on $\gamma\cii \cB(x)$, $x\in G$. 

Our reduced concavity condition~\eqref{main39} is equivalent to the assertion that $\gamma\cii \beta\le0$ and
the determinant of the matrix in~\eqref{main39} is non-negative, i.\,e.,
\eq[det]{
 -\gamma\cii\beta \cdot [-(1+2x_1^2)\beta_{x_1x_1}-2x_1\beta_{x_1}]\ge x_3^2 \beta_{x_3}^2\,.
}

Let us denote
$$
R\df\sup\frac{\cB(x)}{x_3}\,,\qquad x=(x_1,x_3,x_4)\in G\,.
$$
Our goal formulated in~\eqref{logQ} is to prove $ R\ge cQ(\log Q)^\eps$ (with $c=\frac1{515}$ and $\eps=\frac13$). We are still not too close, but notice 
that automatically $\cB(x)\le Rx_3$, $x\in G$.

\subsection{Logarithmic blow-up}
\label{Logpower}

First we find a pointwise estimate on $\gamma\cii \beta$. 
\begin{lemma}
\label{main1}
If $x=(x_1,x_3,x_4)$ is such that $|x_1|\le\frac14x_3$ and $x_4\ge4$, then
$$
-\gamma\cii \beta(x)\le8R(|x_1|+\frac{x_3}{x_4})\,.
$$
\end{lemma}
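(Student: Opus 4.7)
My plan is to reduce the inequality to a one-dimensional problem along the radial curve $t\mapsto(x_1,tx_3,tx_4)$ by introducing $g(t):=\cB(x_1,tx_3,tx_4)$. Under the hypotheses $|x_1|\le x_3/4$ and $x_4\ge4$, this function is defined for $t\ge a_0:=\max(|x_1|/x_3,\,1/x_4)\le 1/4$. Properties~\ref{prop5} and~\ref{prop6} translate into the fact that $g$ is concave and that $g(t)/t$ is non-decreasing. A direct chain-rule computation gives $\gamma\cii\cB(x^t)=t^2 g''(t)$, so
\[
\gamma\cii\beta(x)\;=\;2\int_{1/2}^1 t^2 g''(t)\,dt .
\]

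The decisive trick will be to introduce the auxiliary function $F(t):=tg'(t)-g(t)$, which equals $t^2(g/t)'(t)$. Property~\ref{prop5} then gives $F\ge 0$, while $F'(t)=tg''(t)\le 0$ (from concavity) shows $F$ is non-increasing. Using the identity $t^2g''(t)=tF'(t)$ and integrating by parts once, I expect to obtain
\[
\gamma\cii\beta(x)\;=\;2F(1)-F(1/2)-2\int_{1/2}^1 F(t)\,dt .
\]
Monotonicity of $F$ then yields $2\int_{1/2}^1 F\le F(1/2)$, whence
\[
-\gamma\cii\beta(x)\;\le\;2F(1/2)-2F(1)\;\le\;2F(1/2) .
\]

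It will remain to estimate $F(1/2)=\tfrac12 g'(1/2)-g(1/2)$. The concavity of $g$ applied to the pair of points $a_0<1/2$ gives $g'(1/2)\le(g(1/2)-g(a_0))/(1/2-a_0)$; substituting and carrying out a short algebraic simplification (using $a_0\le 1/4$, so $1-2a_0\ge 1/2$, together with $g(a_0)\ge 0$) produces $F(1/2)\le 4a_0 g(1/2)$. The a priori bound $\cB(y)\le Ry_3$ applied at $x^{1/2}$ gives $g(1/2)\le Rx_3/2$, and since $a_0 x_3=\max(|x_1|,x_3/x_4)\le|x_1|+x_3/x_4$, the bounds chain together to
\[
-\gamma\cii\beta(x)\;\le\;2F(1/2)\;\le\;8a_0\,g(1/2)\;\le\;4R(|x_1|+x_3/x_4),
\]
which is comfortably within the claimed $8R(|x_1|+x_3/x_4)$.

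The hard part is recognizing that $F$ is the right auxiliary quantity: once one sees that $F$ simultaneously encodes the sign and monotonicity coming from properties~\ref{prop5} and~\ref{prop6}, the integration by parts becomes natural and the remaining estimates are routine. Note that the factors of $|x_1|$ and $x_3/x_4$ in the final bound emerge simply from the definition of $a_0$, so no separate case analysis of the two boundary pieces of $G$ is needed.
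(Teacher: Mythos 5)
Your argument is correct and follows essentially the same route as the paper: reduce to the one-variable function $t\mapsto\cB(x^t)$ on $[a_0,1]$, exploit its concavity (property~\ref{prop6}) and the monotonicity of $\cB(x^t)/t$ (property~\ref{prop5}), integrate by parts once, and close with the a priori bound $\cB\le Rx_3$ together with $a_0x_3\le|x_1|+x_3/x_4$. The only cosmetic difference is the auxiliary function --- the paper uses $r(t)=\rho(1)t-\rho(t)$ and evaluates at $t_0=a_0$, while you use $F(t)=tg'(t)-g(t)$ with a secant bound at $t=\tfrac12$ --- and your variant in fact yields the slightly better constant $4R$ in place of $8R$.
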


\begin{proof}
Consider the following functions
$$
\rho(t)\df\cB(x^t),\quad x\in G_1,\qquad\text{and}\qquad r(t)\df\rho(1)t-\rho(t)
$$
on the interval $[t_0,1]$, where $t_0=\max(\frac{|x_1|}{x_3},\frac1{x_4})$.

Recall that the function $\rho(t)/t$ is increasing (see property five of $\Bel$ in Section~\ref{propMT}).
Therefore, $\rho(t)/t\le\rho(1)$, i.\,e. $r(t)\ge0$. Since $r$ is convex (because $\rho$ is concave) and 
$r(1)=0$, $r$ is a decreasing function on $[t_0,1]$, in particular $r'(1)\le0$. Let us estimate the maximal
value of $r$ in the following way:
\eq[endp]{
r(t_0)<\rho(1) t_0\le Rx_3 t_0<R\big(|x_1|+\frac{x_3}{x_4}\big)\,.
}

Under the hypotheses of the Lemma we have $t_0\le\frac14$, and therefore
\begin{align}
\notag
-&\int_{1/2}^1 \rho''(t)\,dt\le\int_{1/2}^1 r''(t)\,dt\le4\int_{1/2}^1(t-t_0)r''(t)\,dt
\\
\notag
&\qquad\le4\int_{t_0}^1(t-t_0)r''(t)\,dt=4r'(1)(1-t_0)-4r(1)+4r(t_0)\,.
\end{align}
Using estimate~\eqref{endp} and the properties of $r$ ($r'(1)\le0$ and $r(1)=0$) we get
$$
-\int_{1/2}^1 \rho''(t)\,dt\le4R(|x_1| +\frac{x_3}{x_4})\,.
$$

The equality $\gamma\cii \cB(x^t)=t^2\rho''(t)$ implies
$$
-\int_{1/2}^1 \gamma\cii \cB(x^t)\,dt\le4R(|x_1|+\frac{x_3}{x_4})\,.
$$
So, by~\eqref{psi-H} this is the stated in the Lemma estimate.
\end{proof}

\medskip

Now we would like to get an estimate for $\beta_{x_3}$ from below. For this aim we construct a pair of test functions
$\vf$, $\psi$ and a test weight $w$, which supply us with the following estimate for the function $\cB$.
\begin{lemma}
\label{PodpirPrimer}
If $x=(x_1,x_3,x_4)$ is such that $2x_3+x_1\ge1$, then
$$
\cB(x)\ge\frac{2x_4-1}4\,.
$$
\end{lemma}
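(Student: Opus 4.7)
My plan is to establish the bound by exhibiting an explicit test triple $(\vf,\psi,w)$ on $I=[0,1]$ whose Bellman point realizes the claimed inequality. The natural candidate is a two-scale piecewise-constant configuration adapted to the splitting $I^-=[0,1/2]$, $I^+=[1/2,1]$. Put $w = 2x_4-1$ on $I^-$ and $w=1$ on $I^+$, so that $\av{w}I = x_4$, $\inf w = 1$, and $[w]_{A_1} = x_4 \le Q$ (all dyadic subintervals other than $I$ itself give ratio $1$). Put $\psi = 0$ on $I^-$ and $\psi = -2$ on $I^+$, so that $\av{\psi}I = -1$ and $\{\psi\ge 0\}=I^-$, yielding $w(\{\psi\ge 0\})/|I| = (2x_4-1)/2$. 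Put $\vf = x_1+r$ on $I^-$ and $\vf = x_1-r$ on $I^+$, where $r\ge 1$ is an amplitude parameter; this gives $\av{\vf}I = x_1$ and makes the Haar-coefficient subordination at level $I$ read $1 \le r$.

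With this triple, one gets immediately $\cB \ge w(\{\psi\ge 0\})/|I| = (2x_4-1)/2 \ge (2x_4-1)/4$ at the Bellman point whose third coordinate is $\av{|\vf| w}I = \tfrac12|x_1+r|(2x_4-1) + \tfrac12|x_1-r|$. For $r \ge |x_1|$, this evaluates to $r\,x_4 + x_1(x_4-1)$, so one tunes $r = (x_3-x_1(x_4-1))/x_4$; the constraint $r\ge 1$ becomes $x_3 \ge x_4 + x_1(x_4-1)$. By reversing the orientation (swap $\vf$'s values on $I^\pm$), one similarly handles $x_3 \ge x_4 - x_1(x_4-1)$. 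Symmetry of $\cB$ in $x_1$ (property seven) lets us assume $x_1 \ge 0$, so the simple construction directly covers $x_3$ in the range $x_3 \ge x_4 - |x_1|(x_4-1)$.

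To cover the remaining range --- where the hypothesis $2x_3+x_1\ge 1$ holds but $x_3$ is smaller than the simple construction allows --- I would refine $\vf$ by one further dyadic level inside $I^-$: split $I^-$ into quarters $[0,1/4]\cup[1/4,1/2]$ and arrange $\psi = 0$ only on the left quarter, adjusting $\vf$'s Haar coefficients at both scales so that $|\vf|$ is driven down on the heavy-weight region while the martingale-transform subordination is preserved. With $\psi \ge 0$ only on $[0,1/4]$ and heavy weight $2x_4-1$ there, $w(\{\psi\ge 0\})/|I| = (2x_4-1)/4$, matching the target exactly. The two free Haar coefficients of $\vf$ (at levels $I$ and $I^-$) are then chosen so that $\av{\vf}I=x_1$ and $\av{|\vf|w}I=x_3$; the condition $2x_3+x_1\ge 1$ is precisely what makes this system solvable while keeping $\psi$'s jumps dominated by $\vf$'s.

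The main obstacle is the tension between subordination, which forces $|\vf|$ to be large wherever $\psi$ makes large jumps, and the heavy weight on the $\psi$-positive region, which amplifies any contribution of $|\vf|$ there into $\av{|\vf| w}I$. The delicate point is placing the $\psi$-positive cell on a dyadic sub-interval where the accumulated Haar-coefficient contributions of $\vf$ can be arranged to cancel out; the hypothesis $2x_3+x_1\ge 1$ is exactly the budget that guarantees this cancellation is feasible simultaneously with the subordination bound and the $A_1$ constraint $[w]_{A_1}\le Q$.
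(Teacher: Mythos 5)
Your first construction is internally consistent, but it only produces Bellman points with third coordinate $x_3=rx_4+x_1(x_4-1)\ge x_4-|x_1|(x_4-1)$, i.e.\ $x_3$ comparable to $x_4$. The lemma is needed in exactly the opposite regime ($x_3\le 1$, $|x_1|\le x_3/4$, $x_4$ as large as $Q$; this is how it feeds into Corollary~\ref{cor3} and Lemma~\ref{Hx3}), so everything rests on your refinement, and the refinement cannot work as described. You keep the heavy weight $2x_4-1$ on all of $I^-=[0,\tfrac12]$ and ask for $\psi\ge0$ on $[0,\tfrac14]$. Write $\vf=x_1+\alpha H_{I}+\beta H_{I^-}$ and $\psi=-1+\alpha'H_{I}+\beta'H_{I^-}$. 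Lifting $\psi$ from $-1$ at the root to $\ge0$ on $[0,\tfrac14]$ forces $|\alpha'|+|\beta'|\ge1$, hence $|\alpha|+|\beta|\ge1$ by subordination. But then the contribution of $I^-$ alone to $\av{|\vf|w}{I}$ is
\[
\tfrac{2x_4-1}{4}\bigl(|x_1-\alpha-\beta|+|x_1-\alpha+\beta|\bigr)
=\tfrac{2x_4-1}{2}\max\{|x_1-\alpha|,|\beta|\}
\ \ge\ \tfrac{2x_4-1}{2}\cdot\tfrac{1-|x_1|}{2},
\]
since $|x_1-\alpha|+|\beta|\ge|\alpha|+|\beta|-|x_1|\ge1-|x_1|$. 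For large $x_4$ and small $|x_1|$ this is of order $x_4$, so no choice of the ``two free Haar coefficients'' can make the third coordinate equal to an $x_3\le1$. The hypothesis $2x_3+x_1\ge1$ does not make the system solvable; the obstruction is the subordination constraint itself, which forces $|\vf|\gtrsim1$ somewhere on whichever dyadic half carries both the heavy weight and the $\psi$-positive cell.

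The missing idea is to decouple the heavy-weight region from the dyadic halves. The paper takes $w=1+2(x_4-1)\chi_{(1/4,3/4)}$, heavy on the \emph{middle} (non-dyadic) half, and gives $\vf$ the Haar coefficients $x_3$, $x_3-x_1$, $x_3+x_1$ at $I$, $(0,\tfrac12)$, $(\tfrac12,1)$, so that the increments cancel and $\vf\equiv0$ exactly on $(\tfrac14,\tfrac34)$; consequently $\av{|\vf|w}{I}=x_3$ no matter how large $x_4$ is. Taking $\psi$ with the same coefficients except for a sign flip at $(\tfrac12,1)$ (so $\eps_J=\pm1$) gives $\psi=2x_3+x_1-1\ge0$ on $(\tfrac12,\tfrac34)$, where $w=2x_4-1$, and the bound $(2x_4-1)/4$ drops out. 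The heavy weight must sit where $\vf$ vanishes but $\psi$ is positive, and those two requirements can only be reconciled off the dyadic grid.
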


\begin{proof}
Below $H_I$ stands for the $L^\infty$-normalized Haar function of interval $I$. Let us take the following test functions on the interval $[0,1]$
\begin{align}
\vf&=x_1+x_3H\cii{(0,1)}+(x_3-x_1)H\cii{(0,\frac12)}+(x_3+x_1)H\cii{(\frac12,1)}\,;\notag
\\
\psi&=-1+x_3H\cii{(0,1)}+(x_3-x_1)H\cii{(0,\frac12)}-(x_3+x_1)H\cii{(\frac12,1)}\,;\notag
\\
w&=1+2(x_4-1)\chi\cii{(\frac14,\frac34)}\,.\notag
\end{align}
The Bellman point corresponding to this triple is $(x_1,-1,x_3,x_4,1)$. The function $\psi$ on the interval $(\frac12,\frac34)$
has the value $2x_3+x_1-1$, where the weight $w$ is $2x_4-1$. Therefore, if $2x_3+x_1\ge1$, then by the definition 
$\cB(x)\ge(2x_4-1)/4$.
\end{proof}

\begin{cor}
\label{cor1}
If $x_3+x_1\ge1$, then
$$
\beta(x)\ge\frac{3x_4-2}8\,.
$$
\end{cor}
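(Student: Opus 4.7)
The plan is to obtain the bound by applying Lemma~\ref{PodpirPrimer} pointwise to the integrand in the definition of $\beta$ and then integrating over $t\in[1/2,1]$.

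First I would observe that $\beta(x)=2\int_{1/2}^1\cB(x^t)\,dt$ with $x^t=(x_1,tx_3,tx_4)$. The trivial but crucial monotonicity is that for $t\in[1/2,1]$,
$$
2tx_3+x_1\ge x_3+x_1\ge 1,
$$
so the hypothesis of Lemma~\ref{PodpirPrimer} is satisfied at every point $x^t$ on the integration interval (here we use $x_3>0$, which holds since $x_3\ge|x_1|$ and $x_3+x_1\ge1$ forces $x_3>0$). Lemma~\ref{PodpirPrimer} thus gives the pointwise estimate
$$
\cB(x^t)\ge\frac{2tx_4-1}{4},\qquad t\in[1/2,1].
$$

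Next I would simply integrate:
\begin{align*}
\beta(x)=2\int_{1/2}^1\cB(x^t)\,dt
&\ge\frac12\int_{1/2}^1(2tx_4-1)\,dt\\
&=\frac12\Bigl(x_4\cdot\frac{3}{4}-\frac12\Bigr)=\frac{3x_4-2}{8}.
\end{align*}

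There is no real obstacle here; the only small thing to verify is that $x^t$ lies in the correct domain for Lemma~\ref{PodpirPrimer} to be meaningful (i.e.\ that the test triple constructed in that lemma is admissible). Since $\beta$ is being used in the setting $x\in G_1$, we have $x_4>2$, so $tx_4>1$ for $t\ge1/2$, and $|x_1|\le tx_3$ follows from $x_3>2|x_1|$; thus $x^t\in G$ and the lemma applies without difficulty.
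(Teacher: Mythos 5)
Your proof is correct and is essentially identical to the paper's: both apply Lemma~\ref{PodpirPrimer} at each point $x^t=(x_1,tx_3,tx_4)$ using $2tx_3+x_1\ge x_3+x_1\ge1$ for $t\in[1/2,1]$, and then integrate $\frac12\int_{1/2}^1(2tx_4-1)\,dt=\frac{3x_4-2}{8}$.
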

\begin{proof}
If $x_3+x_1\ge1$, then $2tx_3+x_1\ge1$ for all $t\in[\frac12,1]$. And therefore,
$$
\beta(x)=2\int_{1/2}^1\cB(x^t)\,dt\ge\frac12\int_{1/2}^1(2tx_4-1)dt=\frac{3x_4-2}8\,.
$$
\end{proof}

\begin{cor}
\label{cor2}
If $x_3+x_1\ge1$ and $x_4\ge2$, then
$$
\beta(x)\ge\frac{x_4}4\,.
$$
\end{cor}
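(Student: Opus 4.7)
The plan is to simply invoke Corollary \ref{cor1} and then perform a one-line algebraic comparison exploiting the extra hypothesis $x_4 \ge 2$.

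Under the hypothesis $x_3 + x_1 \ge 1$, Corollary \ref{cor1} already gives us $\beta(x) \ge \frac{3x_4 - 2}{8}$, so the only thing left to verify is that this bound dominates $x_4/4$ whenever $x_4 \ge 2$. Rewriting the target bound as $x_4/4 = 2x_4/8$, the inequality $\frac{3x_4 - 2}{8} \ge \frac{2x_4}{8}$ reduces to $3x_4 - 2 \ge 2x_4$, i.e.\ $x_4 \ge 2$, which is precisely the extra assumption.

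There is essentially no obstacle here: the Corollary is a direct numerical strengthening of Corollary \ref{cor1} in the regime $x_4 \ge 2$, obtained by replacing the coefficient $3/8$ (and the $-2/8$ correction) by the cleaner coefficient $1/4$. The reason for isolating it as a separate statement is presumably bookkeeping — the cleaner form $\beta(x) \ge x_4/4$ is more convenient to plug into the subsequent estimates involving $\beta_{x_3}$ and the logarithmic blow-up. So the proof will consist of one sentence applying Corollary \ref{cor1} followed by the displayed inequality
\[
\beta(x) \ge \frac{3x_4 - 2}{8} = \frac{2x_4 + (x_4 - 2)}{8} \ge \frac{2x_4}{8} = \frac{x_4}{4},
\]
where the middle inequality uses $x_4 \ge 2$.
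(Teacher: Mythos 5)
Your proof is correct and is exactly the intended argument: the paper states this corollary without proof precisely because it follows from Corollary \ref{cor1} by the one-line comparison $\frac{3x_4-2}{8}\ge\frac{2x_4}{8}=\frac{x_4}{4}$ when $x_4\ge2$.
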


\begin{cor}
\label{cor3}
If $x_4\ge2$, then
$$
\beta(x_1,1,x_4)\ge\frac{x_4}4\,.
$$
\end{cor}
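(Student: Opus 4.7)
The plan is to reduce Corollary~\ref{cor3} to Corollary~\ref{cor2} using the symmetry of $\cB$ in the variable $x_1$. First, I would observe that the averaged function $\beta$ inherits the evenness of $\cB$ in $x_1$. Indeed, property seven of $\Bel$ (stated in Section~\ref{propMT}) gives $\cB(x_1,x_3,x_4)=\cB(-x_1,x_3,x_4)$, and since $\beta(x_1,x_3,x_4)=2\int_{1/2}^{1}\cB(x_1,tx_3,tx_4)\,dt$, the symmetry passes under the integral sign:
$$
\beta(x_1,x_3,x_4)=\beta(-x_1,x_3,x_4)=\beta(|x_1|,x_3,x_4).
$$
In particular $\beta(x_1,1,x_4)=\beta(|x_1|,1,x_4)$.

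Next I would apply Corollary~\ref{cor2} to the point $(|x_1|,1,x_4)$. The hypotheses of Corollary~\ref{cor2} are $x_3+x_1\ge 1$ and $x_4\ge 2$. At the reflected point these become $1+|x_1|\ge 1$, which is automatic, and $x_4\ge 2$, which is the standing assumption of Corollary~\ref{cor3}. Thus Corollary~\ref{cor2} gives
$$
\beta(|x_1|,1,x_4)\ge\frac{x_4}{4},
$$
and combining with the symmetry displayed above yields the desired estimate $\beta(x_1,1,x_4)\ge x_4/4$.

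There is essentially no obstacle here: the corollary is a direct consequence of Corollary~\ref{cor2} combined with the trivial observation that the sign of $x_1$ is irrelevant. The only point worth verifying is that replacing $x_1$ by $|x_1|$ keeps us inside the domain where $\beta$ is defined, but since the defining conditions $x_3>2|x_1|$ and $2<x_4<Q$ depend only on $|x_1|$, this is automatic.
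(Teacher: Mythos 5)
Your argument is exactly the paper's: both reduce to Corollary~\ref{cor2} by noting that $\beta$ is even in $x_1$ (inherited from the evenness of $\cB$), so one may take $x_1\ge 0$, whence $x_3+x_1=1+x_1\ge 1$ holds automatically. The proposal is correct and matches the paper's proof.
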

\begin{proof}
Since the function $\Bel$ is even in $x_1$, the functions $\cB$ and $\beta$ are even as well.
Therefore without loss of generality we can assume that $x_1\ge0$. Hence for $x_3=1$ the
condition $x_3+x_1\ge1$ holds, and we have the required estimate.
\end{proof}

\begin{cor}
\label{a-exist}
If $x_4\ge2$, then there exists an $a=a(x_4)\in(0,1]$ such that $\beta(0,a,x_4)=\frac{x_4}8$.
\end{cor}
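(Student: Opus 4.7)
The plan is to apply the Intermediate Value Theorem to the continuous map $a \mapsto \beta(0, a, x_4)$ on the interval $(0, 1]$ for fixed $x_4 \geq 2$. The argument reduces to checking the value of this map at $a = 1$ and its limit as $a \to 0^+$, and then invoking continuity.

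First I would pin down the value at the right endpoint. By Corollary \ref{cor3}, applied at $x_1 = 0$ and $x_3 = 1$, we immediately obtain
$$
\beta(0, 1, x_4) \geq \frac{x_4}{4} > \frac{x_4}{8}.
$$
Next I would show that $\beta(0, a, x_4) \to 0$ as $a \to 0^+$. Here it is legitimate to assume $R < \infty$, since otherwise the target estimate \eqref{logQ} of Theorem \ref{logQthm} holds vacuously and there is nothing left to prove. By the very definition of $R$ we have $\cB(y_1, y_3, y_4) \leq R y_3$ for every $y \in G$, so by the definition of $\beta$,
$$
\beta(0, a, x_4) = 2 \int_{1/2}^{1} \cB(0, t a, t x_4)\, dt \leq 2 \int_{1/2}^{1} R t a\, dt \leq R a,
$$
which tends to $0$ as $a \to 0^+$. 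In particular, for all sufficiently small $a > 0$, we have $\beta(0, a, x_4) < x_4 / 8$.

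Finally, concavity of $\cB$ (property six in Subsection~\ref{propMT}) is inherited by its averaging $\beta$, so $a \mapsto \beta(0, a, x_4)$ is concave, and hence continuous, on $(0, 1]$. The Intermediate Value Theorem then furnishes some $a = a(x_4) \in (0, 1]$ at which $\beta(0, a, x_4) = x_4 / 8$, which is the desired assertion.

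The only step that requires any thought is the decay of $\beta(0, a, x_4)$ as $a \to 0^+$; this is handled via the global bound $\cB \leq R \cdot x_3$ rather than through any delicate continuity-at-the-boundary argument for concave functions. The rest is a routine application of the IVT.
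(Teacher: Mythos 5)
Your proof is correct and follows essentially the same route as the paper's: the paper also applies the intermediate value theorem to $a\mapsto\beta(0,a,x_4)$, using $\beta(0,0,x_4)=0$, the bound $\beta(0,1,x_4)\ge x_4/4$ from Corollary~\ref{cor3}, and continuity of $\beta$. Your use of $\cB\le Rx_3$ (with $R<\infty$ assumed without loss of generality) to control the behavior as $a\to0^+$ is a slightly more careful treatment of the endpoint than the paper's bare assertion of continuity, but it is the same argument in substance.
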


\begin{proof}
Since the function $\beta$ is continuous, the conditions 
$$
\beta(0,0,x_4)=0\quad\text{and}\quad\beta(0,1,x_4)\ge\frac{x_4}4
$$
guarantee the existence of the required $a$.
\end{proof}

\begin{remark}
The function $\beta$ is increasing in $x_3$ because it is positive, concave, and defined on an infinite interval $(0,\infty)$.
Therefore the root $a$ is unique.
\end{remark}

\begin{lemma}
\label{differ}
For any $x\in G$ we have
\eq[beta0]{
\beta(x)\ge\big(1-\frac{2|x_1|}{x_3}\big)\beta(0,x_3,x_4)\,.
}
\end{lemma}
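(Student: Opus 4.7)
The plan is to use three properties of $\beta$: concavity in $x_1$ for fixed $(x_3,x_4)$, evenness in $x_1$, and non-negativity. Evenness of $\cB$ in $x_1$ (Property seven) survives the $t$-averaging that defines $\beta$, so $\beta(\cdot,x_3,x_4)$ is even; hence I may assume $x_1\ge 0$. Concavity of $\cB$ in $(x_1,x_3,x_4)$ (Property six) implies that $x_1\mapsto\cB(x_1,tx_3,tx_4)$ is concave for every fixed $t$, and integration against $2\,dt$ on $[1/2,1]$ preserves concavity, so $x_1\mapsto\beta(x_1,x_3,x_4)$ is a concave function. Its natural domain in $x_1$ is the interval $|x_1|\le x_3/2$, since $\beta$ requires $(x_1,tx_3,tx_4)\in G$ for every $t\in[1/2,1]$, which in the first slot reduces (at $t=1/2$) to $|x_1|\le x_3/2$. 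This is exactly the interval that produces the factor of $2$ in the statement.

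Assuming $0\le x_1\le x_3/2$, I will write the point as a convex combination of the interval endpoints,
\[
(x_1,x_3,x_4)=\Bigl(1-\tfrac{2x_1}{x_3}\Bigr)(0,x_3,x_4)+\tfrac{2x_1}{x_3}\bigl(\tfrac{x_3}{2},x_3,x_4\bigr),
\]
and apply concavity of $\beta$ along this segment to get
\[
\beta(x_1,x_3,x_4)\ge\Bigl(1-\tfrac{2x_1}{x_3}\Bigr)\beta(0,x_3,x_4)+\tfrac{2x_1}{x_3}\,\beta\!\bigl(\tfrac{x_3}{2},x_3,x_4\bigr).
\]
Since $\cB\ge 0$ everywhere on $G$ (from $\Bel\ge 0$), averaging gives $\beta\ge 0$ as well, so I can drop the second, non-negative term on the right and obtain \eqref{beta0} for $0\le x_1\le x_3/2$. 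Evenness extends it to $|x_1|\le x_3/2$.

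For the remaining range $|x_1|>x_3/2$, the factor $1-\tfrac{2|x_1|}{x_3}$ is non-positive while $\beta(0,x_3,x_4)\ge 0$, so the bound \eqref{beta0} holds trivially. The argument is therefore essentially a one-line concavity estimate; there is no real obstacle, and the only subtlety worth flagging is that pinning down the correct domain of definition of $\beta$ in $x_1$ (namely $|x_1|\le x_3/2$, rather than $|x_1|\le x_3$ as for $\cB$) is precisely what dictates the factor $2$ in $1-\tfrac{2|x_1|}{x_3}$.
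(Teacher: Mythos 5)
Your proof is correct and is essentially the paper's own argument: reduce to $x_1\ge 0$ by evenness, write $(x_1,x_3,x_4)$ as a convex combination of $(0,x_3,x_4)$ and $(\tfrac{x_3}{2},x_3,x_4)$, apply concavity of $\beta$ in $x_1$, and drop the non-negative term at the right endpoint. The extra discussion of the domain of $\beta$ in $x_1$ is a reasonable (and accurate) elaboration, but it does not change the substance.
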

\begin{proof}
Since $\beta$ is even in $x_1$, we can assume $x_1>0$. The stated estimate is 
immediate consequence of the following two facts, $\beta$ is non-negative and concave in $x_1$:
$$
\beta(x)\ge\big(1-\frac{2x_1}{x_3}\big)\beta(0,x_3,x_4)+\frac{2x_1}{x_3}\beta(\frac{x_3}2,x_3,x_4)\,.
$$
\end{proof}

\begin{lemma}
\label{Hx3}
Let $a=a(x_4)$ be the function described in Corollary~\textup{\ref{a-exist}}.
If $x=(x_1,x_3,x_4)$ is such that $4x_1\le x_3\le a$, $2\le x_4\le Q$, then
\eq[beta_x3]{
\beta_{x_3}(x)\ge\max\Big\{\frac{x_4-16Rx_3}{16a},\frac{x_4}8\Big\}\,.
}
\end{lemma}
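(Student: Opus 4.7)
Fix $x=(x_1,x_3,x_4)$ in the indicated region and consider the one-variable concave function $f(t):=\beta(x_1,t,x_4)$, whose derivative is non-increasing in $t$. Both lower bounds on $\beta_{x_3}(x)=f'(x_3)$ will be obtained from chord-slope estimates on $f$, using the information already available at $t=a$ and $t=1$; by evenness of $\beta$ in $x_1$ I may assume $x_1\ge 0$, so the hypothesis reads $4|x_1|\le x_3\le a$.

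For the bound $f'(x_3)\ge x_4/8$, Corollary~\ref{cor3} gives $f(1)\ge x_4/4$, while the evenness and concavity of $\beta$ in $x_1$ imply $f(a)\le\beta(0,a,x_4)=x_4/8$ by Corollary~\ref{a-exist}. Concavity of $f$ on $[a,1]$ then yields
$$f'(a)\ge\frac{f(1)-f(a)}{1-a}\ge\frac{x_4/4-x_4/8}{1-a}=\frac{x_4}{8(1-a)}\ge\frac{x_4}{8},$$
and since $f'$ is non-increasing and $x_3\le a$, we conclude $f'(x_3)\ge f'(a)\ge x_4/8$.

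For the bound $f'(x_3)\ge(x_4-16Rx_3)/(16a)$, I would case-split on the sign of $x_4-16Rx_3$. When $x_4\le 16Rx_3$, the right-hand side is non-positive and the inequality follows from monotonicity of $\beta$ in $x_3$ (remark after Corollary~\ref{a-exist}), which gives $f'(x_3)\ge 0$. When $x_4>16Rx_3$, Lemma~\ref{differ} applied at $t=a$ with $2|x_1|/a\le x_3/(2a)\le 1/2$ yields
$$f(a)\ge\bigl(1-2|x_1|/a\bigr)\beta(0,a,x_4)\ge\tfrac12\cdot\tfrac{x_4}{8}=\tfrac{x_4}{16},$$
while the elementary inequality $\cB(y)\le Ry_3$ combined with the definition $\beta(x)=2\int_{1/2}^1\cB(x^t)\,dt$ gives $f(x_3)\le 3Rx_3/4$. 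Concavity of $f$ produces
$$f'(x_3)\ge\frac{f(a)-f(x_3)}{a-x_3}\ge\frac{x_4/16-3Rx_3/4}{a},$$
where the last step uses $a-x_3\le a$ together with positivity of the numerator (guaranteed since $x_4>16Rx_3>12Rx_3$). Because $-3Rx_3/(4a)\ge -Rx_3/a$, the right-hand side is at least $(x_4-16Rx_3)/(16a)$.

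The main subtlety is arranging the case split so that the $R$-dependence is brought in only where it pays off, and verifying that the factor $1/2$ produced by Lemma~\ref{differ} suffices to keep the constants sharp: the hypothesis $4|x_1|\le x_3$ is precisely what guarantees this factor at $t=a$, and a weaker control would degrade the constants and break the logarithmic blow-up argument in the next step.
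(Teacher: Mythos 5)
Your proof is correct and takes essentially the same route as the paper's: both bounds are chord-slope estimates coming from concavity of $\beta$ in $x_3$, combined with Lemma~\ref{differ}, Corollary~\ref{cor3}, the value $\beta(0,a,x_4)=x_4/8$, and the upper bound $\cB\le Rx_3$. The only cosmetic differences are your case split on the sign of $x_4-16Rx_3$ (the paper sidesteps it by writing $a\beta_{x_3}\ge(a-x_3)\beta_{x_3}$ directly, using $\beta_{x_3}\ge0$) and your marginally sharper bound $\beta(x)\le 3Rx_3/4$ in place of $\beta(x)\le Rx_3$.
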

\begin{proof}
Since $\beta$ is concave with respect to $x_3$, and $\beta_{x_3}\ge0$ for $x_3\in(0,a)$
we can write
$$
a\beta_{x_3}(x)\ge(a-x_3)\beta_{x_3}(x)\ge\beta(x_1,a,x_4)-\beta(x_1,x_3,x_4)\,.
$$
Assuming that $x_1\ge0$ we can use Lemma~\ref{differ}:
$$
\beta(x_1,a,x_4)\ge\big(1-\frac{2x_1}{a}\big)\beta(0,a,x_4)=
\big(1-\frac{2x_1}{a}\big)\frac{x_4}8\ge\frac1{16}x_4\,.
$$
Together with the general estimate $\beta(x)\le Rx_3$ we obtain
$$
\beta_{x_3}(x)\ge\frac{x_4-16Rx_3}{16a}\,.
$$

To get the second inequality we estimate $\beta_{x_3}(a)$:
$$
\beta_{x_3}(a)\ge\frac{\beta(x_1,1,x_4)-\beta(x_1,a,x_4)}{1-a}\ge\beta(x_1,1,x_4)-\beta(x_1,a,x_4)\,.
$$
Now we use Corollary~\ref{cor3} together with the property of $\beta$ to decrease with respect to $x_1$ for $x_1>0$:
$$
\beta(x_1,1,x_4)\ge\frac{x_4}4\qquad\text{and}\qquad\beta(x_1,a,x_4)\le\beta(0,a,x_4)=\frac{x_4}8\,.
$$
In result we get the required estimate:
$$
\beta_{x_3}(a)\ge\frac{x_4}4-\frac{x_4}8=\frac{x_4}8\,.
$$
\end{proof}

Let us denote the function on the right hand side of~\eqref{beta_x3} by $m$. We can rewrite it in the 
following form:
\eq[beta_x3m]{
m(x_3,x_4)=
\begin{cases}
\frac{x_4-16Rx_3}{16a},&\quad\text{if}\quad x_3\le\frac{(1-2a)x_4}{16R}\,;
\\
\frac{x_4}8,&\quad\text{if}\quad x_3\ge\frac{(1-2a)x_4}{16R}\,.
\end{cases}
}

All preparations are made and we are ready to prove Theorem~\ref{logQthm}.

\medskip

\noindent{\it Proof of \bf Theorem~\ref{logQthm}.}
Now we combine Lemmas~\ref{main1} and~\ref{Hx3} to deduce from~\eqref{det} the following inequality
$$
-(1+2x_1^2)\beta_{x_1x_1}-2x_1\beta_{x_1}\ge
\frac{x_3^2(\beta_{x_3})^2}{-\gamma\cii \beta}\ge\frac{x_3^2m^2}{8R(|x_1|+\frac{x_3}{x_4})}\,,
$$
that holds under assumptions $4|x_1|\le x_3\le a\le1$ and $4\ge x_4\le Q$.
Dividing both part of this inequality over $\sqrt{1+2x_1^2}$ we can rewrite it in the form
$$
-\frac\partial{\partial x_1}\Big(\sqrt{1+2x_1^2}\;\beta_{x_1}\Big)
\ge\frac{x_3^2m^2}{8R(|x_1|+\frac{x_3}{x_4})\sqrt{1+2x_1^2}}\,.
$$
Integrating this inequality and taking into account that $\beta$ is even in $x_1$ (i.\,e. $\beta_{x_1}(0,x_3,x_4)=0$) we get
\begin{align*}
-&\sqrt{1+2x_1^2}\;\beta_{x_1}\ge
\frac{x_3^2m^2}{8R}\int_0^{x_1}\frac{dt}{(t+\frac{x_3}{x_4})\sqrt{1+2t^2}}
\\
&\ge\frac{x_3^2m^2}{8R\sqrt{1+2x_1^2}}\int_0^{x_1}\frac{dt}{t+\frac{x_3}{x_4}}=
\frac{x_3^2m^2}{8R\sqrt{1+2x_1^2}}\log\Big(1+\frac{x_4}{x_3}x_1\Big)\,.
\end{align*}
Once more we divide over the square root and integrate in $x_1$:
\begin{align*}
\beta(0,x_3,x_4)&-\beta(x_1,x_3,x_4)\ge
\frac{x_3^2m^2}{8R}\int_0^{x_1}\log\Big(1+\frac{x_4}{x_3}t\Big)\frac{dt}{1+2t^2}
\\
&\ge\frac{x_3^2m^2}{8R(1+2x_1^2)}\int_0^{x_1}\log\Big(1+\frac{x_4}{x_3}t\Big)\,dt
\\
&=\frac{x_3^3m^2}{8Rx_4(1+2x_1^2)}
\Big[\Big(1+\frac{x_4}{x_3}x_1\Big)\log\Big(1+\frac{x_4}{x_3}x_1\Big)-\frac{x_4}{x_3}x_1\Big]
\\
&\ge\frac{x_3^3m^2}{9Rx_4}
\Big[\Big(1+\frac{x_4}{x_3}x_1\Big)\log\Big(1+\frac{x_4}{x_3}x_1\Big)-\frac{x_4}{x_3}x_1\Big]\,.
\end{align*}
In the last estimate we use the restriction $4|x_1|\le x_3\le1$, whence $1+2x_1^2\le\frac98$.

Now we use inequality~\eqref{beta0} from Lemma~\ref{differ} and the general inequality $\beta(x)\le Rx_3$:
$$
\beta(0,x_3,x_4)-\beta(x_1,x_3,x_4)\le\frac{2x_1}{x_3}\beta(0,x_3,x_4)\le2x_1R\,.
$$
Combining with the preceding inequality we come to the following estimate
$$
\frac{x_3^3m^2}{18R^2x_1x_4}
\Big[\Big(1+\frac{x_4}{x_3}x_1\Big)\log\Big(1+\frac{x_4}{x_3}x_1\Big)-\frac{x_4}{x_3}x_1\Big]\le1\,.
$$
Recall that this estimate we obtained in the following domain of variables:
$$
0\le4x_1\le x_3\le a\quad\text{and}\quad 4\le x_4\le Q\,.
$$
Let us now choose the values of this variables. Since the function $t\mapsto\frac{1+t}t\log(1+t)$ monotonously increases,
we get the best possible estimate when take the maximal possible value of $x_1$, i.\,e. $x_1=\frac14x_3$:
$$
\frac{x_3^2m^2}{18R^2x_4}
\Big[\Big(1+\frac{x_4}4\Big)\log\Big(1+\frac{x_4}4\Big)-\frac{x_4}4\Big]\le1\,.
$$
Since the behavior of the function $a(x_4)$ is unknown, we cannot choose the best possible value of $x_4$, we
take the largest value $x_4=Q$:
$$
\frac{x_3^2m^2}{18R^2Q}
\Big[\Big(1+\frac{Q}4\Big)\log\Big(1+\frac{Q}4\Big)-\frac Q4\Big]\le1\,,
$$
where, of course, $a=a(Q)$ and $m=m(x_3,Q)$. To simplify this expression we use the following elementary estimate:
$$
\big(1+\frac t4\big)\log\big(1+\frac t4\big)-\frac t4\ge\frac t{16}\log t\quad\text{for}\quad t\ge4\,.
$$

To check this inequality we consider the function
$$
f(t)\df16\big(1+\frac t4\big)\log\big(1+\frac t4\big)-4t-t\log t
$$
and check that $f(t)\ge0$ for $t\ge4$.
\begin{align*}
f(4)&=32\log2-16-4\log4=8\log\frac8{e^2}>0\,;
\\
f'(t)&=4\log\big(1+\frac t4\big)-\log t-1\,;
\\
f'(4)&=4\log2-\log4-1=\log\frac4e>0\,;
\\
f''(t)&=\frac4{t+4}-\frac1t=\frac{3t-4}{t(t+4)}>0\quad\text{for}\quad t\ge4\,.
\end{align*}

In result we get
\eq[final]{
\frac{x_3^2m^2}{288R^2}\log Q\le1\qquad\text{for any}\quad x_3\in[0,a]\,.
}
Now we need to investigate the function $x_3\mapsto x_3m(x_3,Q)$ on the interval $[0,a]$.
If $a\ge\frac14$ then this function is increasing and takes its maximal value at the point $x_3=a$, and~\eqref{final}
yields
$$
\frac{a^2Q^2}{288R^2\cdot8^2}\log Q\le1\,.
$$
or
\eq[a1]{
R\ge\frac a{96\sqrt2}Q\big(\log Q\big)^{1/2}\ge
\frac{(\log 4\big)^{1/6}}{4\cdot96\sqrt2}Q\big(\log Q\big)^{1/3}\ge
\frac1{515}Q\big(\log Q\big)^{1/3}\,.
}
We specially make the exponent of logarithm worth ($\frac13$ instead of $\frac12$), because we can get only such
exponent for other values of the unknown parameter $a$.

From now on we assume that $a<\frac14$. In this case the function has a local maximum at the point $x_3=\frac{Q}{32R}$.
Indeed, since $aR\ge\beta(0,a,Q)=\frac Q8$, we have $a\ge\frac Q{8R}>\frac Q{32R}$, therefore the point $x_3=\frac{Q}{32R}$
is in the domain. The value of the function $x_3m(x_3,Q)$ at this the point 
is $\frac{Q}{32R}\cdot\frac{Q}{32a}$. On the other hand at the end of the interval for $x_3=a$ we have the value
$am(a,Q)\ge\frac{aQ}8$. If $a^2<\frac{Q}{128R}$ then we use the first estimate:
$$
1\ge\Big(\frac{Q}{32R}\cdot\frac{Q}{32a}\Big)^2\frac1{288R^2}\log Q
\ge\frac{Q^4}{9\cdot2^{25}R^4}\cdot\frac{2^7R}Q\log Q\ge\Big(\frac{Q}{134R}\Big)^3\log Q\,,
$$
or
$$
R\ge\frac1{134}Q\big(\log Q\big)^{1/3}\,.
$$
In the case if $a^2\ge\frac{Q}{128R}$ we use the second estimate:
$$
1\ge\Big(\frac{aQ}8\Big)^2\frac1{288R^2}\log Q
\ge\frac{Q^3}{9\cdot2^{18}R^3}\log Q
\ge\Big(\frac{Q}{134R}\Big)^3\log Q\,,
$$
and again
\eq[a3]{
R\ge\frac1{134}Q\big(\log Q\big)^{1/3}\,.
}
Therefore, if $a<\frac14$ estimate~\ref{a3} holds.

Comparing the estimates we got for different possible values of the unknown parameter $a$, namely, \eqref{a1} 
and~\eqref{a3} we see that the estimate
$$
R\ge\frac1{515}Q\big(\log Q\big)^{1/3}
$$
is true in all cases. This completes the proof of Theorem~\ref{logQthm}, and therefore the proof of 
Theorem~\ref{weakMT-t}.
\bigskip

\noindent{\bf Acknowledgment.} This paper was prepared mainly during the stay of V.~Vasyunin and A.~Volberg
in the Mathematisches Forschungsinstitut Oberwolfach (the program ``Research in Pairs''). We are very grateful 
for the opportunity to work there and for the remarkable work conditions at the Institute.

\bigskip

\end{document}